\documentclass[11pt]{article}
\usepackage[utf8]{inputenc}
\usepackage{amsmath,amsfonts,amssymb}
\usepackage{graphicx}
\usepackage{xcolor}
\definecolor{upforestgreen}{rgb}{0.0, 0.35, 0.12}
\usepackage[colorlinks=true,linkcolor=blue,citecolor=blue,urlcolor=black]{hyperref} 
\usepackage[margin=1in]{geometry}
 \geometry{
 }
 \usepackage{amsthm}
\usepackage{times}
\usepackage{comment}
\usepackage{sectsty}
\usepackage{lmodern}
\usepackage{tikz}
\usetikzlibrary{calc}
\usepackage[light]{sourcesanspro}
\usepackage{natbib}

\newcommand\blfootnote[1]{%
  \begingroup
  \renewcommand\thefootnote{}\footnote{#1}%
  \addtocounter{footnote}{-1}%
  \endgroup
}


\title{\fontsize{25}{28.2}{A Distributionally Robust Estimator that\\Dominates the Empirical Average}}

\author{%
    Nikolas Koumpis~~ and~~ Dionysis Kalogerias \\[5pt]
    Yale University
}

\date{}

\newtheorem{theorem}{Theorem}
\newtheorem{informal theorem}{Informal Theorem}
\newtheorem{assumption}{Assumption}
\newtheorem{corollary}{Corollary}
\newtheorem{proposition}{Proposition}
\newtheorem{lemma}{Lemma}

\allowdisplaybreaks

\begin{document}

\maketitle
\begin{abstract}
\blfootnote{Contact: \texttt{{\{nikolaos.koumpis,dionysis.kalogerias\}@yale.edu}}}
We leverage the duality between risk-averse and distributionally robust optimization (DRO) to devise a distributionally robust estimator that strictly outperforms the empirical average for all probability distributions with negative excess kurtosis. The aforesaid estimator solves the $\chi^{2}-$robust mean squared error problem in closed form.
\end{abstract}

\section{Introduction}
The aim of this paper is to deploy the basic insights of DRO in order to devise an empirical estimator that outperforms the (commonly used) sample average w.r.t.\ the mean squared error; the latter being a standard figure of merit for measuring losses in statistics and machine learning \citep{ girshick1951bayes, james1961estimation, devroye2003estimation, wang2009mean, verhaegen2007filtering,kuhn2019wasserstein}. 
\par To set up the stage for our investigation, let $\mathrm{B}(\mathbb{R}^d)$ be the set of Borel probability measures on $\mathbb{R}^d$ and $D=\{X_{1},\dots,X_{n}\}$ a set of independent identically distributed (i.i.d.) random vectors realizing $X\sim \mu \in \mathrm{B}(\mathbb{R}^d)$. The corresponding to $D$ empirical measure is $\mu_{n}=n^{-1}\sum^{n}_{k=1}\delta_{X_{i}}$, where $\delta_{x}$ is the Delta measure of unit mass located at $x\in \mathbb{R}^{d}$. 
\par By defining the real-valued map $f(\xi,\widehat{\mathrm{X}})\equiv\mathbb{E}_{\xi}\Vert \widehat{\mathrm{X}}-X\Vert^{2}_{2},$ $\xi\in \mathrm{B}(\mathbb{R}^{d})$, we obtain the following characterization of the empirical average
\vspace{-3pt}
\begin{align}
\alpha(D;\mu_{n})=\dfrac{1}{n}\sum^{n}_{i}X_{i}=\underset{\widehat{\mathrm{X}}\in \mathbb{R}^{d}}{\mathrm{argmin}}f(\mu_{n},\widehat{\mathrm{X}}), \label{ea}
\end{align}
as the solution to the surrogate of the ``true'' problem
\begin{align}
\min_{\widehat{\mathrm{X}}\in\mathbb{R}^{d}}f(\mu,\widehat{\mathrm{X}}),
\label{ideal}
\end{align}
when $\mu$ is unknown. Plausibly then, we are interested in the performance of (\ref{ea}) on $\mu$:
\begin{align}
\pi_{\alpha}(\mu,\mu_{n})\equiv f(\mu,\alpha(D;\mu_{n}))-f^{\star},
\end{align}
where\footnote{We assume that the first order moment is finite} $f^{\star}=f(\mu,\mathbb{E}X)$ is the optimal value of (\ref{ideal}). Although (\ref{ea}) dominates
among unbiased estimators for normally distributed low dimensional data, in the case of skewed or heavy tailed measures, it exhibits a rather sub-optimal performance and therefore, it leaves room for improvement. Such an improvement could be potentially achieved by an appropriate re-weighting $\nu_{n}\in \mathrm{B}(\mathbb{R}^d)$, $\nu_{n}\ll\mu_{n}$ on $D$, such that 
\begin{align}
\pi_{\alpha}(\mu,\nu_{n})\leq \pi_{\alpha}(\mu,\mu_{n}).
\end{align}
The most prominent approach for doing so is via distributionally robust optimization. As a paradigm, DRO \citep{scarf1957min} has gained interest due to its connections to regularization, generalization, robustness and statistics \citep{staib2019distributionally}, \citep{blanchet2021statistical}, \citep{bertsimas2004price}, \citep{blanchet2021statistical}, \citep{blanchet2023statistical}, \citep{nguyen2023bridging},\citep{blanchet2024distributionally}. 
\par Let $\mathrm{F}(\beta,\rho)$ be the field of DRO problem instances $\mathrm{P}$ of the form 
\begin{align}
\min_{\widehat{\mathrm{X}}\in\mathbb{R}^{d}}\bigg\{\sup_{\xi\ll\mu}  f(\xi,\widehat{\mathrm{X}})~:~{\xi \in \beta_{\rho}\left(\mu\right)} \bigg\}, \label{dro0}
\end{align}
where $\beta_{\rho}(\mu)\subseteq \mathrm{B}(\mathbb{R}^{d})$ is the uncertainty set of radius $\rho\geq 0$ centered at $\mu$. The two main approaches for representing the uncertainty set, recently unified by \citep{blanchet2023unifying}, are the divergence approach, and the Wasserstein approach. In the former one, distributional shifts are measured in terms of likelihood ratios~\citep{bertsimas2004price,bayraksan2015data,namkoong2016stochastic,duchi2018learning,van2021data}, while in the latter, the uncertainty set is represented by a Wasserstein ball~\citep{esfahani2015data,gao2023distributionally,lee2018minimax,kuhn2019wasserstein}. 
\par Let $\widehat{\mathrm{X}}^{\star}$ be a solution of $\mathrm{P}\in \mathrm{F}(\beta,\rho)$. Based on the discussion so far, we are interested in an ideal sub-field of instances $\mathrm{F}^{\star}(\beta,\rho)\subset \mathrm{F}(\beta,\rho)$ formed by the following design specifications: First, if $\mathrm{P}\in \mathrm{F}^{\star}(\beta,\rho)$, then there exist a saddle point $(\widehat{\mathrm{X}}^{\star}(\xi^{\star}),\xi^{\star}(\widehat{\mathrm{X}}^{\star}))$. Then, the first design specification implies that necessarily
\begin{align}
\widehat{\mathrm{X}}^{\star}=\widehat{\mathrm{X}}^{\star}(\xi^{\star})=\mathbb{E}_{\xi^{\star}(\widehat{\mathrm{X}}^{\star})}X.
\end{align}
In some cases, robustness against distributional shifts is as safeguarding against risky events (naturally coordinated by the distribution tail). That is, solving a DRO problem is equivalent to solving a risk-averse problem. This brings us to the second specification which requires $\widehat{\mathrm{X}}^{\star}$
to be the closed form solution of a risk-averse problem (over $\mu$). Combining the aforesaid specifications would allow us to \textbf{{implement}} the empirical counterpart $\alpha(D;\xi^{\star}_{\rho,n})$ of $\widehat{\mathrm{X}}^{\star}$ from data $D$ drawn from $\mu$. That being said, in this paper we study the following motivating question:
\begin{center}
\textit{Is there an instance $\mathrm{P}\in\mathrm{F}^{\star}(\beta,\rho)$ such that $\pi_{\alpha}(\mu,\xi^{\star}_{\rho,n}(\beta))< \pi_{\alpha}(\mu,\mu_{n})$?}
\end{center}
The aforesaid dual relation between risk-averse and DRO suggests a place to start our design. In particular, our methodology is as follows: We are going to state a risk-averse problem (which in principle is easier to design), that is also solvable in closed form. Then, we are going to show that under certain conditions, its solution also solves a certain, precisely defined DRO problem. Then, we are going to verify our initial assertion by studying the performance of the corresponding empirical solution. We start by considering the following risk-constrained extension of (\ref{ideal}):
\begin{equation}
\begin{array}{cl}
\underset{\widehat{\mathrm{X}}\in \mathbb{R}^{d}}{\operatorname{min}}&f(\mu,\widehat{\mathrm{X}}) \\
\text { s.t. } & \mathbb{E}_{\mu}\big(\Vert{ X}-\widehat{\mathrm{X}}\Vert_2^2~-~\mathbb{E}_{\mu}\Vert{X}-\widehat{\mathrm{X}}\Vert_2^2\big)^2  \leq \varepsilon.
\end{array} 
\label{rfp0}
\end{equation}
The risk-averse problem (\ref{rfp0}) is a quadratically constrained quadratic program and admits the closed form solution (see Section \ref{sec2})
\begin{align}
\widehat{ \mathrm{X}}^{\star}_{\lambda^{\star}}{=}\big(I+4\lambda^{\star}\Sigma\big)^{-1}\Big(\mathbb{E} X+2\lambda^{\star}\big(\mathbb{E}[\Vert X\Vert^2_{2} X]-\mathbb{E}\Vert X\Vert^{2}_{2}\mathbb{E} X\big)\Big), \label{b}
\end{align}
where $\lambda^{\star}\geq {0}$. Risk-averse optimization refers to the optimization of risk measures. Unlike expectations, these are (often) convex functionals on appropariate spaces of random variables and deliver optimizers that take into account the risky events of the underlying distribution. Unlike expectations, a convex risk measure can be represented through the Legendre transform as the supremum of its affine minorants. This representation essentially characterizes the risk measure as a supremum over a certain class of affine functionals, specifically over a subset of the dual space of signed measures \citep{shapiro2021lectures}. For the particular class of coherent risk measures, this set contains probability measures and thus, safeguarding for risky events is as being robust against distributional shifts. In other words, solving a risk-constrained problem on the population at hand is as solving a min-max problem where expectations are taken for an appropriate re-weighting of that population.
\par That said, the challenge is that minimization of a risk measure does not always correspond to a DRO formulation. As an example, problem (\ref{rfp0}) is closely related to the mean-variance risk-measure \citep[p. 275]{shapiro2021lectures}, which does not lead to a DRO formulation in general. Mean-variance optimization was initially utilized by~\citep{e5a1bb8f-41b7-35c6-95cd-8b366d3e99bc}, in portfolio selection. Later on, \citep{abeille2016lqg} deploys mean-variance for dynamic portfolio allocation problems, and \citep{kalogerias2020better} stressed the importance of safeguarding against risky events in mean squared error Bayesian estimation. DRO with $\chi^2$-divergence is almost equivalent as controlling by the variance \citep{gotoh2018robust, lam2016robust,duchi2019variance} were the worst case measure is computed exactly in $O(n\log(n))$ \citep{staib2019distributionally}. Although minimization of the mean-variance risk-measure does not correspond to an element of $\mathrm{F}(\beta,\rho)$, the mean deviation risk measure of order $p=2$ does (see Theorem \ref{Th}).
\par At this point, we state informally the first result of this paper (see Theorem \ref{Th}), which establishes that the \textbf{candidate} problem (\ref{rfp0}) corresponds to an instance $\mathrm{P}\in\mathrm{F}^{\star}(\beta,\rho)$: 
\begin{informal theorem}\label{Th000}
There exists constant $\lambda^{\star}>0$ such that, for every $\lambda\leq \lambda^{\star}$, $f$ has a saddle point  and (\ref{b}) solves the $\chi^{2}-$DRO problem 
\begin{align}
\underset{\widehat{{\mathrm{X}}}\in\mathbb{R}^{d}}{\mathrm{min}}~\bigg\{ \sup_{\nu\ll\mu}~\mathbb{E}_{ \nu}\Vert \widehat{\mathrm{X}}-X\Vert^{2}_{2}~:~\chi^{2}(\nu||\mu)\leq{\lambda}\bigg\}. \label{dro000}
\end{align}
\end{informal theorem}
As discussed later, Theorem \ref{Th} states that for the corresponding levels of risk $\varepsilon$ of (\ref{rfp0}), the closed form solution of (\ref{rfp0}) solves all $\mathrm{P}\in \mathrm{F}(\chi^{2},\lambda^{\star})$.
We answer the main question of the paper in the one dimensional case (see Theorem \ref{theorem}). We find that in the one dimensional case, the empirical counterpart of (\ref{b}) dominates the empirical average for all probability distributions with negative excess kurtosis, $\kappa$.
\begin{informal theorem} \label{theorem}Let $\widehat{\mathrm{X}}^{\star}_{n,\bar{\lambda}}$ be the empirical counterpart of (\ref{b}), and $\bar{ \mathrm{X}}_{n}$ be the empirical average. For all platykurtic probability distributions, there is  $\lambda_{n}=O(|\kappa|/n)$ such that, for every $n\geq 3$ and every $\bar{\lambda}<\lambda_{n}$,
\begin{align}
\mathbb{E}|\widehat{\mathrm{X}}^{\star}_{n,\bar{\lambda}}- \mathbb{E}X|^2<\mathbb{E}|\bar{\mathrm{X}}_{n}- \mathbb{E}X|^2. \label{gn}
\end{align}
\end{informal theorem}

\begin{figure}[h!]
\centering
    \includegraphics[scale=0.22]{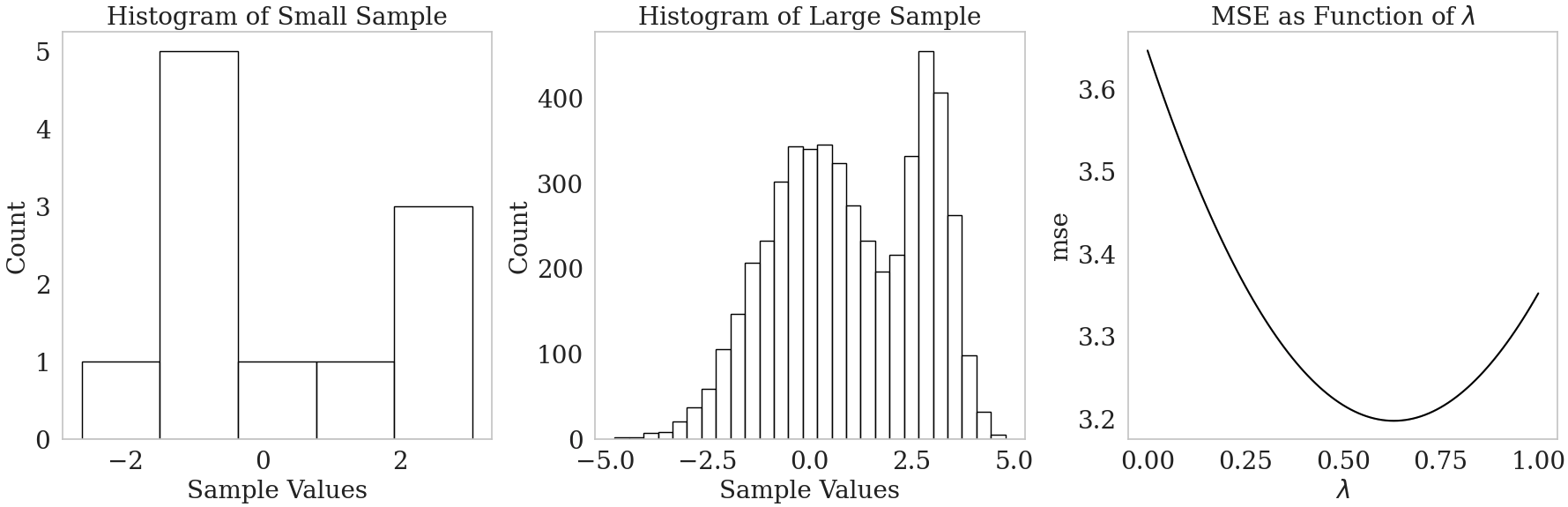}
    \caption{Gaussian mixture model of two mixands with parameters $\mathrm{mean}_1=3, \mathrm{mean}_2=$ $0.1, \sigma_1=0.5, \sigma_2=1.7$, and mixing proportions $\pi_1=0.3, \pi_2=1-\pi_1$. The condition of Theorem \ref{theorem} is satisfied with $3 m_2^2-m_4=11.4622$. Specifically, the variance of the mixture is calculated to be $m_2=3.826$, and the fourth central moment is approximately $m_4=32.46$. We draw $n=23$ i.i.d. samples (left) and we test $\widehat{\mathrm{X}}_{23, \bar{\lambda}}^{\star}=1.963-\bar{\lambda} 3.61$, against the nominal population simulated with $n=10000$ samples (middle) w.r.t.\ the mse (right) $\operatorname{mse}(\bar{\lambda})=\mathbb{E}_\mu(1.963-\bar{\lambda} 3.61-X)^2$.}
    \label{fig:enter-label}
\end{figure}
All proofs of the results presented hereafter can be found in the supplementary material.

\newpage
\section{Risk-constrained minimum mean squared error estimators}\label{sec2}
We begin analyzing the risk-constrained mmse problem (\ref{rfp0}) by first referring the reader to \citep[Lemma 1]{kalogerias2019risk}, where the well-definiteness of the statement (\ref{rfp0}) is ensured under the following regularity condition
\begin{assumption} \label{as1}
It is true that $\int \|{ x}\|_2^3~\mathrm{d}\mu( x)<+\infty\text {.}$   
\end{assumption}

\par By noticing that the constraint in (\ref{rfp0}) can be written as a perfect square, we establish a quadratic reformulation that on the one hand, allows us to interpret the solution to (\ref{rfp0}) as a stereographic projection, and on the other, sets the stage for a derivative-free solution: 
\begin{lemma}[The risk-constrained estimator as stereographic projection] \label{lem1}
Under Assumption \ref{as1}, problem (\ref{rfp0}) is well-defined and equivalent to the convex quadratic program
\begin{equation}
\begin{array}{cl}
\underset{\widehat{ \mathrm{X}}\in\mathbb{R}^{d}}{\mathrm{min}} & \mathbb{E}_{\mu}\|{ X}-\widehat{ \mathrm{X}}\|^{2}_{2}\\
 \mathrm { s.t. } & \|\widehat{ \mathrm{X}}-\frac{1}{4}\Sigma^{-1}\zeta^{-}\|^{2}_{4\Sigma}\le\bar{\varepsilon}
\end{array}\label{rfp2}
\end{equation}
where
$\zeta^{-}=2\big(\mathbb{E}[\|{ X}\|^{2}_{2}{ X}]-\mathbb{E}\|{ X}\|^{2}_{2}\mathbb{E} X
\big)$, $\bar{\varepsilon}=\varepsilon+(\zeta^{-})^{\top}\frac{\Sigma^{-1}}{4}\zeta^{-}-\alpha$, $\alpha=\mathbb{E}(|| X||^{2}-\mathbb{E}|| X||^{2})^2$ and ${\Sigma}\succcurlyeq{0}$ is the covariance matrix of $ X$ \footnote{By $\Vert\cdot\Vert_{Q}$ we mean the quadratic form $x^{\top}Qx$ w.r.t.\ the positive semi-definite matrix $Q\in\mathbb{R}^{n\times n}$.}.
\end{lemma}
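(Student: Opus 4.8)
The plan is to reduce the variance-type constraint of (\ref{rfp0}) to an ellipsoidal one by expanding and completing the square, after which the convex-QCQP structure is immediate; the well-definedness half of the statement I would simply inherit from \citep[Lemma~1]{kalogerias2019risk}, noting that Assumption~\ref{as1} in particular makes $\mathbb{E}[\|X\|_2^2 X]$, and hence $\zeta^{-}$, finite.

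First I would rewrite the left-hand side of the constraint of (\ref{rfp0}) as the variance $\mathrm{Var}_{\mu}\big(\|X-\widehat{\mathrm{X}}\|_2^2\big)$. Using $\|X-\widehat{\mathrm{X}}\|_2^2=\|X\|_2^2-2\widehat{\mathrm{X}}^{\top}X+\|\widehat{\mathrm{X}}\|_2^2$ and discarding the deterministic term $\|\widehat{\mathrm{X}}\|_2^2$, which leaves the variance unchanged, I get
\[
\mathrm{Var}_{\mu}\big(\|X-\widehat{\mathrm{X}}\|_2^2\big)=\mathrm{Var}_{\mu}\big(\|X\|_2^2\big)-4\,\widehat{\mathrm{X}}^{\top}\mathrm{Cov}_{\mu}\big(\|X\|_2^2,X\big)+4\,\widehat{\mathrm{X}}^{\top}\Sigma\widehat{\mathrm{X}}.
\]
Identifying $\mathrm{Cov}_{\mu}(\|X\|_2^2,X)=\tfrac12\zeta^{-}$ and $\mathrm{Var}_{\mu}(\|X\|_2^2)=\alpha$ recovers exactly the quantities in the statement, and since the Hessian of this quadratic in $\widehat{\mathrm{X}}$ is $8\Sigma\succcurlyeq 0$, the feasible set is already convex.

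Next I would complete the square: with $\Sigma$ invertible (as the appearance of $\Sigma^{-1}$ presupposes),
\[
4\,\widehat{\mathrm{X}}^{\top}\Sigma\widehat{\mathrm{X}}-2\,\widehat{\mathrm{X}}^{\top}\zeta^{-}=\big\|\widehat{\mathrm{X}}-\tfrac14\Sigma^{-1}\zeta^{-}\big\|_{4\Sigma}^{2}-\tfrac14(\zeta^{-})^{\top}\Sigma^{-1}\zeta^{-},
\]
so the constraint $\mathrm{Var}_{\mu}(\|X-\widehat{\mathrm{X}}\|_2^2)\le\varepsilon$ is equivalent to $\|\widehat{\mathrm{X}}-\tfrac14\Sigma^{-1}\zeta^{-}\|_{4\Sigma}^{2}\le\varepsilon-\alpha+\tfrac14(\zeta^{-})^{\top}\Sigma^{-1}\zeta^{-}=\bar\varepsilon$, which is precisely the constraint of (\ref{rfp2}); the objective is untouched. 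Finally, noting that $\mathbb{E}_{\mu}\|X-\widehat{\mathrm{X}}\|_2^2=\|\widehat{\mathrm{X}}-\mathbb{E}X\|_2^2+\mathrm{tr}\,\Sigma$ is convex quadratic and the feasible set is an ellipsoid, (\ref{rfp2}) is a convex QCQP, as claimed.

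There is no deep obstacle here; the steps I would treat most carefully are the constant bookkeeping --- checking that the residual $-\tfrac14(\zeta^{-})^{\top}\Sigma^{-1}\zeta^{-}$ from the square completion combines with $-\alpha$ and $+\varepsilon$ to give exactly the stated $\bar\varepsilon$ --- together with the standing regularity points (invertibility of $\Sigma$, or restriction to $\mathrm{range}(\Sigma)$ in the degenerate case, and finiteness of all expectations entering $\zeta^{-}$ and $\alpha$), for which I would lean on Assumption~\ref{as1} and \citep[Lemma~1]{kalogerias2019risk} rather than re-deriving them.
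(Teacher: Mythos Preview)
Your proof is correct and follows essentially the same route as the paper: expand the constraint as a quadratic in $\widehat{\mathrm{X}}$ and complete the square to obtain the ellipsoidal form. The only cosmetic difference is that the paper writes the expansion as a block quadratic form in $[1,\widehat{\mathrm{X}}^{\top}]^{\top}$ and then invokes a Schur-complement factorization, whereas you reach the same quadratic directly via the bilinearity of variance/covariance; the content is identical.
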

Lemma \ref{lem1} shows the equivalence of (\ref{rfp0}) to the convex quadratic program (\ref{rfp2}) allowing to interpret the former in geometric terms based on the standard inner product in $\mathbb{R}^d$. Within this setting, the solution to (\ref{rfp0}) is a stereographic projection of the $\mathrm{mmse}-$sphere onto $\mathbb{R}^{d}$. Further, we can do slightly more by applying Pythagora's theorem to the objective in (\ref{rfp2}) to obtain 
\begin{equation}
\begin{array}{cl}
\underset{\widehat{ \mathrm{X}}\in{\mathbb{R}^d}}{\mathrm{min}} & \Vert\widehat{ \mathrm{X}}-\mathbb{E} X\Vert^{2}_{2}\\
\mathrm{s.t.} & 
\Vert\widehat{ \mathrm{X}}-\frac{1}{4}{\Sigma}^{-1}\zeta^{-}\Vert^{2}_{4{\Sigma}}\leq{\bar{\varepsilon}},
\end{array},\label{rfp3}
\end{equation}
where by $\zeta^{-}$ we mean the opposite vector to $\zeta$. According to Lemma \ref{lem1}, the solution of (\ref{rfp3}) (and therefore of (\ref{rfp0})) is the point lying on the ellipse of radius $\sqrt{\bar{\varepsilon}}$ and of center $\frac{1}{4}{\Sigma}^{-1}\zeta$, with the least distance from the orthogonal projection $\mathbb{E} X$.
Being guided by the geometric picture, the problem is feasible for any $\bar{\varepsilon}\geq{0}$. In particular, for all  $\bar{\varepsilon}\geq \bar{\varepsilon}_{c}=\Vert{\mathbb{E} X-\frac{1}{4}{\Sigma}^{-1}\zeta^{-}\Vert^{2}_{4{\Sigma}}}$, $\widehat{ \mathrm{X}}^{*}=\mathbb{E} X$, while for $\bar{\varepsilon}=0$, a solution exists only when $\mathbb{E} X=\frac{1}{4}\Sigma^{-1}\zeta^{-}$. 
Moving forward, for all $\bar{\varepsilon} \in (0,\bar{\varepsilon}_{c})$ the solution is the unique touching point of the circle and the ellipse and it is obtained by equating the ``equilibrium forces''. However, based on Lemma \ref{lem1}, we may proceed derivative-free (see Appendix \ref{appixA}).
\begin{proposition}[The risk-constrained mmse estimator]\label{prop1} Under the Assumption \ref{as1}, the optimal solution to (\ref{rfp0}) is
\begin{align}
\widehat{ \mathrm{X}}^{\star}_{\lambda^{\star}}{=}\big(I+4\lambda^{*}\Sigma\big)^{-1}\Big(\mathbb{E} X+2\lambda^{*}\big(\mathbb{E}[\Vert X\Vert^2_{2} X]-\mathbb{E}\Vert X\Vert^{2}_{2}\mathbb{E} X\big)\Big), \label{btr}
\end{align}
where $\lambda^{*}\geq {0}$ is the solution to the corresponding dual problem. 
\end{proposition}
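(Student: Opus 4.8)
\noindent\emph{Proof proposal.} The plan is to turn the constrained problem into its convex surrogate via Lemma \ref{lem1} and then read off the solution from Lagrangian duality for a single‑constraint quadratically constrained quadratic program. By Lemma \ref{lem1}, under Assumption \ref{as1} problem (\ref{rfp0}) is equivalent to (\ref{rfp2}) (equivalently (\ref{rfp3})), i.e.\ to minimizing the strictly convex quadratic $\widehat{\mathrm{X}}\mapsto\mathbb{E}_{\mu}\Vert X-\widehat{\mathrm{X}}\Vert_{2}^{2}=\Vert\widehat{\mathrm{X}}-\mathbb{E}X\Vert_{2}^{2}+\mathrm{tr}\,\Sigma$ over the ellipsoidal (for $\Sigma\succ0$; otherwise cylindrical) feasible set $\{\widehat{\mathrm{X}}:\Vert\widehat{\mathrm{X}}-\tfrac{1}{4}\Sigma^{-1}\zeta^{-}\Vert_{4\Sigma}^{2}\le\bar\varepsilon\}$. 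First I would record that for $\bar\varepsilon>0$ this set has nonempty interior, so Slater's condition holds and strong duality is in force (equivalently, the S‑procedure is lossless since there is a single quadratic constraint); the flat case $\bar\varepsilon=0$ reduces to an affine‑constrained convex problem and is handled directly, while $\bar\varepsilon\ge\bar\varepsilon_{c}$ gives $\widehat{\mathrm{X}}^{\star}=\mathbb{E}X$, consistent with $\lambda^{\star}=0$ in (\ref{btr}).

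Next I would form the Lagrangian $L(\widehat{\mathrm{X}},\lambda)=\Vert\widehat{\mathrm{X}}-\mathbb{E}X\Vert_{2}^{2}+\mathrm{tr}\,\Sigma+\lambda\big(\Vert\widehat{\mathrm{X}}-\tfrac{1}{4}\Sigma^{-1}\zeta^{-}\Vert_{4\Sigma}^{2}-\bar\varepsilon\big)$ for $\lambda\ge0$. For each fixed $\lambda$ this is a strictly convex quadratic in $\widehat{\mathrm{X}}$ with Hessian $2I+8\lambda\Sigma\succcurlyeq2I$, hence it has a unique unconstrained minimizer, obtained by setting $\nabla_{\widehat{\mathrm{X}}}L=0$; after dividing by $2$ and using $4\lambda\Sigma\cdot\tfrac{1}{4}\Sigma^{-1}\zeta^{-}=\lambda\zeta^{-}$ this reads $(I+4\lambda\Sigma)\widehat{\mathrm{X}}=\mathbb{E}X+\lambda\zeta^{-}$. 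Because $\Sigma\succcurlyeq0$ and $\lambda\ge0$, the matrix $I+4\lambda\Sigma$ is invertible, so the Lagrangian minimizer is $\widehat{\mathrm{X}}(\lambda)=(I+4\lambda\Sigma)^{-1}(\mathbb{E}X+\lambda\zeta^{-})$, and substituting $\zeta^{-}=2(\mathbb{E}[\Vert X\Vert_{2}^{2}X]-\mathbb{E}\Vert X\Vert_{2}^{2}\mathbb{E}X)$ produces exactly (\ref{btr}). Evaluating at the dual optimum $\lambda=\lambda^{\star}$ and invoking strong duality (so the primal–dual optimal pair is a saddle point of $L$, and convexity makes any KKT point globally optimal) identifies $\widehat{\mathrm{X}}^{\star}=\widehat{\mathrm{X}}(\lambda^{\star})$.

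Finally I would pin down $\lambda^{\star}$ via complementary slackness: either the constraint is slack and $\lambda^{\star}=0$, equivalently $\mathbb{E}X$ is feasible ($\bar\varepsilon\ge\bar\varepsilon_{c}$), or the constraint is active and $\lambda^{\star}>0$ solves the secular equation $g(\lambda):=\Vert\widehat{\mathrm{X}}(\lambda)-\tfrac{1}{4}\Sigma^{-1}\zeta^{-}\Vert_{4\Sigma}^{2}=\bar\varepsilon$; a short computation gives $\widehat{\mathrm{X}}(\lambda)-\tfrac{1}{4}\Sigma^{-1}\zeta^{-}=(I+4\lambda\Sigma)^{-1}(\mathbb{E}X-\tfrac{1}{4}\Sigma^{-1}\zeta^{-})$, so diagonalizing $\Sigma$ exhibits $g$ as continuous and strictly decreasing from $\bar\varepsilon_{c}$ to $0$ on $[0,\infty)$, and the root exists and is unique for $\bar\varepsilon\in(0,\bar\varepsilon_{c})$. (This last monotonicity step is precisely where the derivative‑free argument of Appendix \ref{appixA} can be substituted.) I expect the main obstacle to be not the algebra of the stationarity equation, which is routine, but the clean justification of strong duality together with the case analysis over the feasibility regimes $\bar\varepsilon=0$, $\bar\varepsilon\in(0,\bar\varepsilon_{c})$, $\bar\varepsilon\ge\bar\varepsilon_{c}$; once strong duality is secured, convexity does the rest and the closed form (\ref{btr}) drops out immediately.
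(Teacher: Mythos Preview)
Your proposal is correct and follows the same overall architecture as the paper: invoke Lemma~\ref{lem1} to pass to the convex reformulation~(\ref{rfp3}), verify Slater's condition and strong duality, form the Lagrangian, minimize it in $\widehat{\mathrm{X}}$ for fixed $\lambda$, and identify the primal optimum at $\lambda=\lambda^{\star}$. The closed form~(\ref{btr}) and its derivation via $(I+4\lambda\Sigma)\widehat{\mathrm{X}}=\mathbb{E}X+\lambda\zeta^{-}$ are exactly what the paper obtains.

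The one genuine methodological difference is in how the Lagrangian is minimized. You set $\nabla_{\widehat{\mathrm{X}}}L=0$ and solve the linear system directly. The paper instead proceeds \emph{derivative-free}: it writes $L(\widehat{\mathrm{X}},\lambda)$ as a quadratic form in $(\widehat{\mathrm{X}}^{\top},1)^{\top}$ with block matrix $\begin{pmatrix} I+4\lambda\Sigma & -(\mathbb{E}X+\lambda\zeta^{-})\\ -(\mathbb{E}X+\lambda\zeta^{-})^{\top} & \ast\end{pmatrix}$ and then applies the same Schur-complement factorization used in the proof of Lemma~\ref{lem1} to complete the square, from which the minimizer $\widehat{\mathrm{X}}^{\star}_{\lambda}=(I+4\lambda\Sigma)^{-1}(\mathbb{E}X+\lambda\zeta^{-})$ can be read off by inspection. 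Your gradient computation is more elementary and arguably quicker; the paper's factorization is consistent with its emphasis on the geometric/derivative-free viewpoint and reuses the machinery already set up for Lemma~\ref{lem1}. Your additional discussion of the secular equation $g(\lambda)=\bar\varepsilon$ and its monotonicity goes beyond what the paper records in the proof itself (the paper simply declares $\lambda^{\star}$ to be the dual optimum), but it is correct and a useful complement.
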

\par Because by construction, (\ref{rfp0}) aims to reduce the estimation error variability, it is expected to do so by shifting the optimal estimates towards the areas suffering high loss, that is, towards the tail of the underlying distribution. The next result characterizes the bias of (\ref{btr}): The risk constrained estimator is a shifted version of the conditional expectation by a fraction of the Fisher's moment coefficient of skewness plus the cross third-order statistics of $X$.
\begin{corollary}[Risk-constrained mmse estimator and Fisher's skewness]\label{fish}The estimator (\ref{btr}) operates according to 
\vspace{-5pt}
\begin{align}
\widehat{ \mathrm{X}}^{\star}_{i,u,\lambda}&=\mathbb{E}{ X}_{i,u}{+}\frac{\lambda\sigma_{i}\sqrt{\sigma_{i}}}{1+2\lambda\sigma_{i}}\mathbb{E}\left(\frac{{ X}_{i,u}-\mathbb{E}{ X}_{i,u}}{\sqrt{\sigma_{i}}}\right)^{3} {+}\mathrm{T}_{i,\lambda}, \label{probetter}
\end{align}
where ${ X}_{i,u}$, and $\widehat{ \mathrm{X}}^{\star}_{i,u,\lambda}$ denotes the $i$th component of ${ X}_{u}\equiv U^{\top}{ X}$, and $U^{\top}\widehat{{ \mathrm{X}}}^{*}_{\lambda}$, respectively, $U$ the orthonormal matrix with columns the eigen-vectors of  ${\Sigma}_{{ X}}$, and \\$\small\mathrm{T}_{i,\lambda}=\frac{\lambda}{1+2\lambda\sigma_{i}}\sum_{k\neq{i}}\mathbb{E}{ X}^{2}_{k,u}\mathbb{E}{ X}_{i,u}-\mathbb{E}[{ X}^{2}_{k,u}{ X}_{i,u}].$
\label{coro2}
\end{corollary}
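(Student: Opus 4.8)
The plan is to read off \eqref{btr} in the eigenbasis of $\Sigma$. Write $\Sigma=U\Lambda U^{\top}$ with $\Lambda=\mathrm{diag}(\sigma_1,\dots,\sigma_d)$ and $U$ orthonormal, so that $(I+4\lambda^{*}\Sigma)^{-1}=U(I+4\lambda^{*}\Lambda)^{-1}U^{\top}$. Left-multiplying \eqref{btr} by $U^{\top}$ and using that $\|x\|_2=\|U^{\top}x\|_2$ — hence $\|X\|_2^2=\|X_u\|_2^2=\sum_{k}X_{k,u}^2$ and $U^{\top}\mathbb{E}[\|X\|_2^2X]=\mathbb{E}[\|X_u\|_2^2X_u]$ — the system decouples across coordinates. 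Absorbing the leading factor $2$ of \eqref{btr} into the multiplier ($\lambda:=2\lambda^{*}$, which is exactly the normalization that yields the $1+2\lambda\sigma_i$ denominator), the $i$-th coordinate reads
\[
\widehat{\mathrm{X}}^{\star}_{i,u,\lambda}=\frac{1}{1+2\lambda\sigma_i}\Big(\mathbb{E}X_{i,u}+\lambda\big(\mathbb{E}[\|X_u\|_2^2X_{i,u}]-\mathbb{E}\|X_u\|_2^2\,\mathbb{E}X_{i,u}\big)\Big).
\]

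Next I would split $\|X_u\|_2^2=X_{i,u}^2+\sum_{k\neq i}X_{k,u}^2$ inside both expectations, separating the diagonal contribution $\mathbb{E}[X_{i,u}^3]-\mathbb{E}[X_{i,u}^2]\,\mathbb{E}X_{i,u}$ from the off-diagonal remainder $\sum_{k\neq i}\big(\mathbb{E}[X_{k,u}^2X_{i,u}]-\mathbb{E}[X_{k,u}^2]\,\mathbb{E}X_{i,u}\big)$; after multiplying by $\lambda$ and dividing by $1+2\lambda\sigma_i$, the latter is precisely $\mathrm{T}_{i,\lambda}$ (up to the sign inherited from the ``opposite vector'' convention $\zeta^{-}$ of Lemma \ref{lem1}). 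For the diagonal part I would complete the cube: with $m_i=\mathbb{E}X_{i,u}$ and $\sigma_i=\mathbb{E}[X_{i,u}^2]-m_i^2$, expanding $\mathbb{E}[(X_{i,u}-m_i)^3]$ gives
\[
\mathbb{E}[X_{i,u}^3]-\mathbb{E}[X_{i,u}^2]\,\mathbb{E}X_{i,u}=\mathbb{E}\big[(X_{i,u}-m_i)^3\big]+2\,m_i\,\sigma_i .
\]
Multiplying the spurious term $2m_i\sigma_i$ by $\lambda$ and adding it to the $\mathbb{E}X_{i,u}=m_i$ already in the numerator produces $m_i(1+2\lambda\sigma_i)$, which cancels the denominator and leaves $\mathbb{E}X_{i,u}$ as the leading term of \eqref{probetter}. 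Finally, writing $\mathbb{E}[(X_{i,u}-m_i)^3]=\sigma_i^{3/2}\,\mathbb{E}\big((X_{i,u}-m_i)/\sqrt{\sigma_i}\big)^3$ identifies the middle summand as $\tfrac{\lambda\sigma_i\sqrt{\sigma_i}}{1+2\lambda\sigma_i}$ times Fisher's moment coefficient of skewness of $X_{i,u}$, which establishes \eqref{probetter}.

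I expect the only real obstacle to be the bookkeeping in the ``completing the cube'' step: one must verify that the sole correction linear in $m_i$ is exactly $2m_i\sigma_i$, so that it merges cleanly with the denominator and no bias is introduced in the mean itself — i.e.\ that the entire $\lambda$-dependent perturbation of the conditional mean is carried by the skewness term and by the cross third-order moments $\mathrm{T}_{i,\lambda}$. A secondary nuisance is keeping the sign of the off-diagonal piece consistent with the $\zeta^{-}$ convention when matching it to $\mathrm{T}_{i,\lambda}$; everything else (the diagonalization and the invariance of $\|\cdot\|_2$ under $U$) is routine once the coordinate-wise decoupling is in hand.
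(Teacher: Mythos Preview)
Your proposal is correct and reaches the claimed identity, but it takes a different (and more direct) route than the paper's proof. The paper does not read \eqref{btr} off in the eigenbasis immediately; instead it first differentiates \eqref{better0} with respect to $\lambda$, exploits the vanishing commutator $[\Pi(\lambda),\Sigma]=0$ to simplify the derivative, and then \emph{integrates} back to obtain the representation $\widehat{\mathrm{X}}^{\star}_{\lambda}=\widehat{\mathrm{X}}^{\star}_{0}+U\mathrm{H}(\lambda)U^{\top}\widehat{\Delta X}^{\star}_{0\infty}$ with $\mathrm{H}(\lambda)=\mathrm{diag}\big(\tfrac{4\lambda\sigma_i}{1+4\lambda\sigma_i}\big)$. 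Only after this detour does it pass to the eigenbasis and complete the cube, exactly as you do. What the paper's approach buys is the explicit ``interpolation'' picture between the endpoints $\widehat{\mathrm{X}}^{\star}_{0}=\mathbb{E}X$ and $\widehat{\mathrm{X}}^{\star}_{\infty}=(4\Sigma)^{-1}\zeta^{-}$, which is used elsewhere (e.g.\ in the definition of $\gamma^{\star}$); what your approach buys is a shorter proof that avoids the differentiation/integration machinery entirely, since $(I+4\lambda^{*}\Sigma)^{-1}=U(I+4\lambda^{*}\Lambda)^{-1}U^{\top}$ already decouples the coordinates. Your flagged ``obstacle'' is exactly right: the identity $\mathbb{E}[X_{i,u}^{3}]-\mathbb{E}[X_{i,u}^{2}]\,\mathbb{E}X_{i,u}=\mathbb{E}\big[(X_{i,u}-m_i)^{3}\big]+2m_i\sigma_i$ is the only nontrivial algebraic step, and it is precisely what makes the $\mathbb{E}X_{i,u}$ term emerge cleanly after cancelling $1+2\lambda\sigma_i$. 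Your caution about the sign of the off-diagonal piece is also well placed --- the paper's own derivation carries this sign through the $\widehat{\mathrm{X}}^{\star}_{0}-\widehat{\mathrm{X}}^{\star}_{\infty}$ difference rather than via $\zeta^{-}$ directly, and matching it to the stated $\mathrm{T}_{i,\lambda}$ is indeed pure bookkeeping.
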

In an effort to minimize the squared error on average while maintaining its variance small, (\ref{btr}) attributes the estimation error made by the conditional expectation to the skewness of the conditional distribution along the eigen-directions of the conditional expected error. In particular, in the one-dimensional case, cross third-order statistics vanish and therefore,  
\vspace{-5pt}
\begin{align}
\widehat{ \mathrm{X}}^{\star}_{\lambda}=\mathbb{E} X+\frac{\lambda\sigma\sqrt{\sigma}}{1+2\lambda\sigma}\mathbb{E}\Bigg( \frac{ X-\mathbb{E}{ X}}{\sqrt{\sigma}}\Bigg)^{3}~,~\lambda\in[0,+\infty). \label{fisherform}
\end{align}
Corollary \ref{coro2} reveals the ``internal mechanism'' with which (\ref{btr}) biases towards the tail of the underline distribution. Per error eigen-direction, (\ref{btr}) compensates for the high losses relative to the expectation, by a fraction of the Fisher's skewness coefficient and a term $\mathrm{T}_{i}$, that captures the cross third-order statistics of the state components. Although (\ref{probetter}) involves the cross third-order statistics of the projected state, these statistics cannot emerge artificially from a coordinate change such as $U^{\top}{ X}$, but are intrinsic to the state itself. Besides, a meaningfull measure of risk should always be coordinate change invariant.
\par Motivated by the geometric picture, and the Lagrangean of (\ref{rfp0}), we note that there exists a unique random element such that (\ref{btr}) is viewed as its corresponding average (see Appendix \ref{proofweightedlemm}). Fortunately, such an element can be expressed as a Radon-Nikodym derivative w.r.t.\ $\mu$.
\begin{lemma}[Risk constrained estimator as a weighted average]\label{weightedlemma} The primal optimal solution of (\ref{rfp0}) can be written as 
\begin{align}
\widehat{ \mathrm{X}}^{\star}_{\lambda}=\mathbb{E}_{ \xi(\widehat{ \mathrm{X}}^{\star}_{\lambda},\lambda)}  X,\label{ex}
\end{align}
\vspace{-5pt}
where 
\begin{align}
\frac{\mathrm{d}{\xi}}{\mathrm{d}\mu}( X)\equiv1+2\lambda\big(\Vert X-\widehat{ \mathrm{X}}^{\star}_{\lambda}\Vert^{2}_{2}-\mathbb{E}\Vert X-\widehat{ \mathrm{X}}^{\star}_{\lambda}\Vert^{2}_{2}\big).    \label{RN}
\end{align}
In particular, for $\lambda\leq{1}/{\gamma^{*}}$, $\gamma^{*}=2\big(\mathbb{E}\Vert\widehat{ \mathrm{X}}^{\star}_{\infty}-\widehat{ \mathrm{X}}^{\star}_{0} \Vert^{2}_{2}+\mathrm{mmse}(\mu)\big)$, it follows that $\frac{\mathrm{d}{\xi}}{\mathrm{d}\mu}\geq{0}$.
\end{lemma}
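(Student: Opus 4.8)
\emph{Proof proposal.} There are two things to establish: the identity $\widehat{\mathrm{X}}^{\star}_{\lambda}=\mathbb{E}_{\xi}X$ with $\tfrac{d\xi}{d\mu}$ as in (\ref{RN}), and the nonnegativity $\tfrac{d\xi}{d\mu}\ge 0$ once $\lambda\le 1/\gamma^{*}$. The plan is to read the representation off the first-order (Lagrangian) optimality condition of (\ref{rfp0}), and to get the nonnegativity from a uniform-in-$\lambda$ bound on the bias of the closed form (\ref{btr}).

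For the representation, form the Lagrangian $L(\widehat{\mathrm{X}},\lambda)=\mathbb{E}_{\mu}\Vert X-\widehat{\mathrm{X}}\Vert_{2}^{2}+\lambda\big(\mathbb{E}_{\mu}(\Vert X-\widehat{\mathrm{X}}\Vert_{2}^{2}-\mathbb{E}_{\mu}\Vert X-\widehat{\mathrm{X}}\Vert_{2}^{2})^{2}-\varepsilon\big)$ and differentiate in $\widehat{\mathrm{X}}$. With $\nabla_{\widehat{\mathrm{X}}}\Vert X-\widehat{\mathrm{X}}\Vert_{2}^{2}=2(\widehat{\mathrm{X}}-X)$, the gradient of the centered-square term equals $-4\,\mathbb{E}_{\mu}\big[(\Vert X-\widehat{\mathrm{X}}\Vert_{2}^{2}-\mathbb{E}_{\mu}\Vert X-\widehat{\mathrm{X}}\Vert_{2}^{2})(X-\mathbb{E}_{\mu}X)\big]$, the centering of $X$ arising because $\mathbb{E}_{\mu}\nabla_{\widehat{\mathrm{X}}}\Vert X-\widehat{\mathrm{X}}\Vert_{2}^{2}$ is subtracted inside. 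Setting the total gradient to zero at the stationary point $\widehat{\mathrm{X}}^{\star}_{\lambda}$ (with multiplier $\lambda$) and using that the scalar $\Vert X-\widehat{\mathrm{X}}^{\star}_{\lambda}\Vert_{2}^{2}-\mathbb{E}_{\mu}\Vert X-\widehat{\mathrm{X}}^{\star}_{\lambda}\Vert_{2}^{2}$ has zero $\mu$-mean (so centering $X$ is immaterial), one obtains $\widehat{\mathrm{X}}^{\star}_{\lambda}=\mathbb{E}_{\mu}\big[(1+2\lambda(\Vert X-\widehat{\mathrm{X}}^{\star}_{\lambda}\Vert_{2}^{2}-\mathbb{E}_{\mu}\Vert X-\widehat{\mathrm{X}}^{\star}_{\lambda}\Vert_{2}^{2}))X\big]=\mathbb{E}_{\mu}[\tfrac{d\xi}{d\mu}(X)X]=\mathbb{E}_{\xi}X$; the same zero-mean fact gives $\mathbb{E}_{\mu}\tfrac{d\xi}{d\mu}=1$, so $\xi$ is a probability measure precisely when $\tfrac{d\xi}{d\mu}\ge 0$. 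Assumption \ref{as1} makes every integral above finite. Equivalently one can verify the identity by brute force: substituting (\ref{btr}) and using $\mathbb{E}_{\mu}[(\Vert X-a\Vert_{2}^{2}-\mathbb{E}_{\mu}\Vert X-a\Vert_{2}^{2})X]=(\mathbb{E}[\Vert X\Vert_{2}^{2}X]-\mathbb{E}\Vert X\Vert_{2}^{2}\mathbb{E}X)-2\Sigma a$ turns $\mathbb{E}_{\xi}X=\widehat{\mathrm{X}}^{\star}_{\lambda}$ into the linear system $(I+4\lambda\Sigma)\widehat{\mathrm{X}}^{\star}_{\lambda}=\mathbb{E}X+2\lambda(\mathbb{E}[\Vert X\Vert_{2}^{2}X]-\mathbb{E}\Vert X\Vert_{2}^{2}\mathbb{E}X)$, which is exactly (\ref{btr}).

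For the nonnegativity, $\tfrac{d\xi}{d\mu}\ge 0$ $\mu$-a.s.\ is equivalent to $2\lambda\big(\mathbb{E}_{\mu}\Vert X-\widehat{\mathrm{X}}^{\star}_{\lambda}\Vert_{2}^{2}-\Vert X-\widehat{\mathrm{X}}^{\star}_{\lambda}\Vert_{2}^{2}\big)\le 1$ $\mu$-a.s., and since $\Vert X-\widehat{\mathrm{X}}^{\star}_{\lambda}\Vert_{2}^{2}\ge 0$ it suffices that $2\lambda\,\mathbb{E}_{\mu}\Vert X-\widehat{\mathrm{X}}^{\star}_{\lambda}\Vert_{2}^{2}\le 1$. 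By the Pythagorean decomposition already used to pass from (\ref{rfp2}) to (\ref{rfp3}), $\mathbb{E}_{\mu}\Vert X-\widehat{\mathrm{X}}^{\star}_{\lambda}\Vert_{2}^{2}=\mathrm{mmse}(\mu)+\Vert\widehat{\mathrm{X}}^{\star}_{\lambda}-\mathbb{E}X\Vert_{2}^{2}$, so it remains to bound the bias uniformly in $\lambda$. From (\ref{btr}), writing $w=\tfrac12(\mathbb{E}[\Vert X\Vert_{2}^{2}X]-\mathbb{E}\Vert X\Vert_{2}^{2}\mathbb{E}X)-\Sigma\mathbb{E}X$ gives $\widehat{\mathrm{X}}^{\star}_{\lambda}-\mathbb{E}X=2\lambda(I+4\lambda\Sigma)^{-1}w$; diagonalizing $\Sigma$, the $i$-th coordinate in its eigenbasis is $\tfrac{2\lambda}{1+4\lambda\sigma_{i}}w_{i}$, which is increasing in $\lambda$ with limit $\tfrac{1}{2\sigma_{i}}w_{i}$ when $\sigma_{i}>0$ (and a $\mu$-a.s.\ constant coordinate of $X$ forces $w_{i}=0$, so the directions with $\sigma_{i}=0$ contribute nothing). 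Hence $\Vert\widehat{\mathrm{X}}^{\star}_{\lambda}-\mathbb{E}X\Vert_{2}^{2}$ is nondecreasing in $\lambda$ with supremum $\Vert\widehat{\mathrm{X}}^{\star}_{\infty}-\mathbb{E}X\Vert_{2}^{2}=\mathbb{E}\Vert\widehat{\mathrm{X}}^{\star}_{\infty}-\widehat{\mathrm{X}}^{\star}_{0}\Vert_{2}^{2}$ (using $\widehat{\mathrm{X}}^{\star}_{0}=\mathbb{E}X$, the $\lambda=0$ case of (\ref{btr})). Therefore $2\lambda\,\mathbb{E}_{\mu}\Vert X-\widehat{\mathrm{X}}^{\star}_{\lambda}\Vert_{2}^{2}\le 2\lambda\big(\mathbb{E}\Vert\widehat{\mathrm{X}}^{\star}_{\infty}-\widehat{\mathrm{X}}^{\star}_{0}\Vert_{2}^{2}+\mathrm{mmse}(\mu)\big)=\lambda\gamma^{*}\le 1$ exactly when $\lambda\le 1/\gamma^{*}$, which is the claim.

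The first part is bookkeeping once the stationarity condition is written down; the only genuine work is the uniform bias bound, i.e.\ showing $\Vert\widehat{\mathrm{X}}^{\star}_{\lambda}-\mathbb{E}X\Vert_{2}$ never exceeds its $\lambda\to\infty$ value. The diagonalization settles this cleanly, the one point needing care being a possibly singular $\Sigma$, handled by noting that the null space of $\Sigma$ lies in the null space of $w$. If one is content with a looser constant than $\gamma^{*}$, a coarse eigenvalue-free estimate of $\Vert 2\lambda(I+4\lambda\Sigma)^{-1}\Vert$ on the range of $\Sigma$ would also close the argument.
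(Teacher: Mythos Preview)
Your proof is correct and follows essentially the same route as the paper: both derive (\ref{ex})--(\ref{RN}) from the Lagrangian stationarity condition of (\ref{rfp0}), then obtain nonnegativity from the lower bound $\tfrac{d\xi}{d\mu}\ge 1-2\lambda\,\mathbb{E}\Vert X-\widehat{\mathrm{X}}^{\star}_{\lambda}\Vert_{2}^{2}$ together with the monotonicity of $\lambda\mapsto\mathbb{E}\Vert X-\widehat{\mathrm{X}}^{\star}_{\lambda}\Vert_{2}^{2}$ (the paper merely asserts this monotonicity, while you make it explicit via diagonalization and the Pythagorean split). Two harmless slips to clean up: your formula should read $\widehat{\mathrm{X}}^{\star}_{\lambda}-\mathbb{E}X=4\lambda(I+4\lambda\Sigma)^{-1}w$ (giving coordinate limit $w_{i}/\sigma_{i}$, which is indeed $\widehat{\mathrm{X}}^{\star}_{\infty}-\widehat{\mathrm{X}}^{\star}_{0}$), and ``increasing in $\lambda$'' should be ``increasing in absolute value'' since $w_{i}$ may be negative---neither affects the argument.
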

Combining Lemma \ref{weightedlemma} and (\ref{fisherform}), we obtain \footnote{Where we explicitly refer to the measure w.r.t.\ which expectations are taken.} 
\vspace{-5pt}
\begin{align}
\mathbb{E}_{\xi_{\lambda}} X=\mathbb{E}_{\mu} X+\frac{\lambda\sigma\sqrt{\sigma}}{1+2\lambda\sigma}\mathbb{E}_{\mu}\left(\frac{ X-\mathbb{E}_{\mu} X}{\sqrt{\sigma}}\right)^3, ~\lambda\leq 1/\gamma^{*} \label{erqp}
\end{align}
In words, averaging on the re-distribution $\xi$ is the same as biasing the expectation w.r.t.\ the initial measure $\mu$ towards the tail of $\mu$ based on a fraction of the Fisher's moment coefficient of skewness. This fact can be considered as a strong indication of another manifestation of duality between risk-averse and distributionally robust optimization. 
In addition, for the one dimensional case, 
assuming that $\mu$ is positively skewed, we obtain the bound
\vspace{-5pt}
\begin{align}
\mathbb{E}_{{\xi(\gamma^{*})}} X-\mathbb{E}_{ \mu} X\leq\frac{\sigma\sqrt{\sigma}}{2\Big(\frac{\sigma}{2}{\mathbb{E}\Big|\frac{ X-\mathbb{E} X}{\sqrt{\sigma}}\Big|^{3}+\mathrm{mmse}(\mu)}\Big)+2\sigma}\mathbb{E}\Bigg(\frac{ X-\mathbb{E} X}{\sqrt{\sigma}}\Bigg)^{3}.  \label{trift} 
\end{align}
That is, the expectations of neighboring populations that can be tracked with re-weighting are determined by a fraction of the skewness of the population at hand. In the case of empirical measures, (\ref{trift}) implies that lack of mass in the area of the tail is seen as the presence of risky events.

\section{Distributionally robust mean square error estimation}\label{drosec}
In this section we show that (\ref{ex}) (and therefore the corresponding solution of (\ref{rfp0})) is the solution to a $\chi^{2}-$ DRO problem for all $\chi^{2}-$radii less than a computable constant. We start, by taking into account the equivalence between (\ref{rfp0}) and the mean-deviation risk measure of order $p=2$ (see e.g. \citep{shapiro2021lectures}). To not overload the notation, let us declare $\Vert{ X}-\widehat{ \mathrm{X}}\Vert^{2}_{2}=Z_{\widehat{ \mathrm{X}}}$. Then, (\ref{rfp0}) reads
\begin{equation}
\begin{array}{cl}
\underset{\widehat{ \mathrm{X}}\in\mathbb{R}^{d}}{\operatorname{min}} & \mathbb{E}_{\mu}~Z_{\widehat{ \mathrm{X}}} \\
\text { s.t. } & \mathrm{var}_{\mu}Z_{\widehat{ \mathrm{X}}}\leq \varepsilon,
\end{array} 
\label{mv}
\end{equation}
or equivalently  
\begin{equation}
\begin{array}{cl}
\underset{\widehat{ \mathrm{X}}\in\mathbb{R}^{d}}{\operatorname{min}} & \mathbb{E}_{\mu}~Z_{\widehat{{ X}}} \\
\text { s.t. } & \sqrt{\rule{0pt}{2ex}\mathrm{var}_{\mu}Z_{\widehat{ \mathrm{X}}}}\leq \sqrt{\varepsilon}.
\end{array} 
\label{md}
\end{equation}
It is a standard exercise to verify that the constraint in (\ref{md}) is convex w.r.t. $\widehat{ \mathrm{X}}$ which leads us to the following result.
\begin{lemma}[Relation between the Lagrange multipliers of (\ref{mv}), and (\ref{md})]\label{relationofmultipliers} Let $\lambda^{*}_{mv}$ and $\lambda^{*}_{md}$ be the Lagrange multipliers associated with the constraint of (\ref{mv}), and (\ref{md}) respectively. Then, for all $\varepsilon\geq\varepsilon_{\min}$ it holds
\begin{align}
{\lambda^{*}_{md}(\varepsilon)}=2\lambda^{*}_{mv}(\varepsilon){\sqrt{\varepsilon}}. \label{lmr}
\end{align}
\end{lemma}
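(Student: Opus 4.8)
The plan is to exploit the fact that (\ref{mv}) and (\ref{md}) are \emph{the same optimization problem} written with two different but monotonically related constraint functions, so that they share the same feasible set, the same optimal solution $\widehat{\mathrm{X}}^{\star}$, and differ only in how the active constraint gradient is normalized. Writing $v(\widehat{\mathrm{X}})\equiv\mathrm{var}_{\mu}Z_{\widehat{\mathrm{X}}}$, problem (\ref{mv}) imposes $v(\widehat{\mathrm{X}})\le\varepsilon$ and (\ref{md}) imposes $\sqrt{v(\widehat{\mathrm{X}})}\le\sqrt{\varepsilon}$; since $t\mapsto\sqrt{t}$ is strictly increasing on $[0,+\infty)$ these cut out identical sets. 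Both programs are convex: $\widehat{\mathrm{X}}\mapsto\mathbb{E}_{\mu}\|X-\widehat{\mathrm{X}}\|_{2}^{2}$ is convex quadratic, $v$ is convex by Lemma \ref{lem1} (it is a perfect square of a nonnegative convex map), and convexity of the constraint in (\ref{md}) was noted above. For $\varepsilon>\varepsilon_{\min}$ a strictly feasible point exists, so Slater's condition holds and the KKT multipliers $\lambda^{*}_{mv}(\varepsilon)$, $\lambda^{*}_{md}(\varepsilon)$ exist and (together with $\nabla v(\widehat{\mathrm{X}}^{\star})\neq 0$, see below) are unique.

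Next I would write the stationarity conditions at the common optimizer $\widehat{\mathrm{X}}^{\star}$. For (\ref{mv}): $\nabla_{\widehat{\mathrm{X}}}\mathbb{E}_{\mu}Z_{\widehat{\mathrm{X}}^{\star}}+\lambda^{*}_{mv}\,\nabla_{\widehat{\mathrm{X}}}v(\widehat{\mathrm{X}}^{\star})=0$. For (\ref{md}), using the chain rule $\nabla_{\widehat{\mathrm{X}}}\sqrt{v(\widehat{\mathrm{X}})}=\tfrac{1}{2\sqrt{v(\widehat{\mathrm{X}})}}\nabla_{\widehat{\mathrm{X}}}v(\widehat{\mathrm{X}})$ valid wherever $v>0$: $\nabla_{\widehat{\mathrm{X}}}\mathbb{E}_{\mu}Z_{\widehat{\mathrm{X}}^{\star}}+\tfrac{\lambda^{*}_{md}}{2\sqrt{v(\widehat{\mathrm{X}}^{\star})}}\nabla_{\widehat{\mathrm{X}}}v(\widehat{\mathrm{X}}^{\star})=0$. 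If the constraint is inactive at $\widehat{\mathrm{X}}^{\star}$, then $\lambda^{*}_{mv}=\lambda^{*}_{md}=0$ and (\ref{lmr}) is trivial. If it is active, complementary slackness gives $v(\widehat{\mathrm{X}}^{\star})=\varepsilon>0$, so the square root is harmless; moreover $\nabla v(\widehat{\mathrm{X}}^{\star})\neq 0$, since otherwise the first stationarity identity forces $\nabla\mathbb{E}_{\mu}Z_{\widehat{\mathrm{X}}^{\star}}=0$, i.e.\ $\widehat{\mathrm{X}}^{\star}=\mathbb{E}_{\mu}X$ would be the unconstrained minimizer and the constraint would be slack, a contradiction. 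Subtracting the two identities then yields $\big(\lambda^{*}_{mv}-\tfrac{\lambda^{*}_{md}}{2\sqrt{\varepsilon}}\big)\nabla v(\widehat{\mathrm{X}}^{\star})=0$, hence $\lambda^{*}_{md}(\varepsilon)=2\sqrt{\varepsilon}\,\lambda^{*}_{mv}(\varepsilon)$, which is (\ref{lmr}).

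An alternative, essentially equivalent route is via sensitivity of the optimal value. Let $p(\cdot)$ be the optimal value of (\ref{mv}) as a function of its right-hand side; because the feasible sets of the two programs coincide, the optimal value of (\ref{md}) with right-hand side $t$ equals $p(t^{2})$. Since for convex programs $-\lambda^{*}_{mv}(\varepsilon)$ is a subgradient of the convex map $p$ at $\varepsilon$ (and $p$ is differentiable there under Slater), differentiating $t\mapsto p(t^{2})$ at $t=\sqrt{\varepsilon}$ gives $\lambda^{*}_{md}(\varepsilon)=-2\sqrt{\varepsilon}\,p'(\varepsilon)=2\sqrt{\varepsilon}\,\lambda^{*}_{mv}(\varepsilon)$, recovering (\ref{lmr}).

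The main obstacle — really the only delicate point — is the degenerate regime $v(\widehat{\mathrm{X}}^{\star})=0$, where $\|X-\widehat{\mathrm{X}}^{\star}\|_{2}^{2}$ is $\mu$-a.s.\ constant; there $\sqrt{v}$ is nonsmooth and the chain-rule step fails. This is exactly what the restriction $\varepsilon\ge\varepsilon_{\min}$ controls: one argues that $\varepsilon_{\min}$ is the smallest attainable value of $v$, so that an active constraint forces $v(\widehat{\mathrm{X}}^{\star})=\varepsilon$, and one must either assume $\varepsilon_{\min}>0$ or treat the boundary case $\varepsilon=\varepsilon_{\min}$ separately. A secondary, routine point is establishing uniqueness of the multipliers so that ``the'' multiplier in (\ref{lmr}) is unambiguous, which follows from Slater's condition together with $\nabla v(\widehat{\mathrm{X}}^{\star})\neq 0$.
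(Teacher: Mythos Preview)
Your proposal is correct and follows essentially the same route as the paper: write the KKT stationarity conditions for (\ref{mv}) and (\ref{md}), apply the chain rule $\nabla\sqrt{v}=\tfrac{1}{2\sqrt{v}}\nabla v$, and invoke complementary slackness to replace $\sqrt{v(\widehat{\mathrm{X}}^{\star})}$ by $\sqrt{\varepsilon}$. The paper's proof is a terse three-line version of your first argument; your additional care about the inactive case, nonvanishing of $\nabla v(\widehat{\mathrm{X}}^{\star})$, and the degenerate point $v=0$ goes beyond what the paper spells out, and your sensitivity-analysis alternative is not in the paper but is a sound equivalent route.
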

Moving forward, strong Lagrangean duality yields
\begin{align}
\begin{array}{cl}
\underset{\widehat{ \mathrm{X}}\in\mathbb{R}^{d}}{\operatorname{min}} & \mathbb{E}_{\mu}~Z_{\widehat{{ X}}} \\
\text { s.t. } & \sqrt{\rule{0pt}{2ex}\mathrm{var}_{\mu}Z_{\widehat{ \mathrm{X}}}}\leq \sqrt{\varepsilon}.
\end{array}
&=-\lambda_{md}^{\star}({\varepsilon})\sqrt{\varepsilon}+\min_{ \widehat{{ X}}\in\mathbb{R}^{d}} \rho(Z_{\widehat{ \mathrm{X}}}),\nonumber
\end{align}
where 
\begin{align}
\rho(Z_{\widehat{ \mathrm{X}}})\equiv\mathbb{E}_{\mu}Z_{\widehat{ \mathrm{X}}}+\lambda_{md}^{\star}({\varepsilon})\sqrt{\mathrm{var}_{\mu}Z_{\widehat{{ X}}}}, \label{primalform}
\end{align}
is the mean-deviation risk measure of order $p=2$ w.r.t.\ $\mu$ (see e.g. \citep{shapiro2021lectures}), with sensitivity constant, the optimal Lagrange multiplier $\lambda^{\star}_{md}(\varepsilon)$.

\subsection{Dual representation of the mean-deviation risk measure of order $p=2$}
Equation (\ref{primalform}) expresses the mean-deviation risk measure in its primal form, where the sensitivity constant is identified with the Lagrange multiplier associated with the constraint in (\ref{rfp0}). Aiming to study robustness in distributional shifts, the place to start is with the dual representation of (\ref{primalform}). Following \citep{shapiro2021lectures}, the variance in (\ref{primalform}), can be expressed via the norm in $\mathcal{L}_{2}$
\begin{align}
(\mathrm{var}_{\mu}Z_{\widehat{ \mathrm{X}}})^{1/2}=(\mathbb{E}_{\mu}|Z_{\widehat{ \mathrm{X}}}-\mathbb{E}_{\mu}Z_{\widehat{ \mathrm{X}}}|^{2})^{1/2}=\Vert Z_{\widehat{ \mathrm{X}}}-\mathbb{E}_{\mu}Z_{\widehat{ \mathrm{X}}}\Vert_{{\mathcal{L}_{2}}}\nonumber,
\end{align}
the latter being identified with its dual norm
$\|Z\|_{\mathcal{L}^{*}_{2}}=\sup _{\|\xi\|_{_{\mathcal{L}_2}} \leq 1}\langle\xi, Z\rangle$, where $||\xi||_{\mathcal{L}_2}\equiv||\xi||_{2}=\sqrt{\mathbb{E}_{\mu}\mathbb{\xi}^{2}}$, and $\langle\xi,Z\rangle=\mathbb{E}_{\mu}\xi{Z}$. Thus, we may write 
$$(\mathrm{var}_{\mu}Z_{\widehat{ \mathrm{X}}})^{1/2}=\sup_{||\xi||_{2}\leq{1}} \langle \xi-\mathbb{E}_{\mu}\xi,Z_{\widehat{ \mathrm{X}}}\rangle,$$
and subsequently (\ref{primalform}) as:
\begin{align}
\hspace{-9pt}\rho(Z_{\widehat{ \mathrm{X}}})&=\langle 1,Z_{\widehat{ \mathrm{X}}}\rangle+\lambda_{md}^{*}({\varepsilon})\sup_{||\xi||_{2}\leq{1}} \langle \xi-\mathbb{E}_{\mu}\xi,Z_{\widehat{ \mathrm{X}}}\rangle\nonumber\\[2pt]
&=\sup_{||\xi||_{2}\leq{1}}\langle Z_{\widehat{ \mathrm{X}}},1\rangle+\lambda_{md}^{*}({\varepsilon}) \langle \xi-\mathbb{E}_{\mu}\xi,Z_{\widehat{ \mathrm{X}}}\rangle\nonumber\\[2pt]
&=\sup_{||\xi||_{2}\leq{1}}\langle 1+\lambda_{md}^{*}({\varepsilon})( \xi-\mathbb{E}_{\mu}\xi),Z_{\widehat{ \mathrm{X}}}\rangle\nonumber\\[2pt]
&=\sup_{\xi^{\prime}\in{\Xi}}\langle\xi^{\prime},Z_{\widehat{ \mathrm{X}}}\rangle,~\Xi\equiv\bigg\{ \xi^{\prime}\in \mathcal{L}^{*}_{2}:\xi^{\prime}=1+\lambda_{md}^{*}({\varepsilon})(\xi-\mathbb{E}_{\mu}\xi),||\xi||_{2}\leq{1}\bigg\}.\label{dualrepresentation}
\end{align}
From (\ref{dualrepresentation}),  $\mathbb{E}_{\mu}\xi^{\prime}=1$, while $\mathbb{E}_{\mu}\xi^{2}\leq{1}$ implies $\mathbb{E}_{\mu}(\xi^{\prime}-1)^2\leq\lambda^{2*}_{md}(\varepsilon)$. Therefore, $\Xi\subseteq \mathrm{U}(\lambda^{*}_{md})$, where 
\begin{align}
\mathrm{U}(\lambda^{*}_{md})\equiv\big\{\xi\in \mathcal{L}^{*}_{2}~:~\mathbb{E}_{\mu}\xi=1,~\mathbb{E}_{\mu}(\xi-1)^{2}\leq\lambda_{md}^{2*}({\varepsilon})\big\}. \label{U}
\end{align}
In addition, let $\xi^{\prime}=\xi-1\Leftrightarrow \xi=1+\xi'+\mathbb{E}\xi'$. Then $\lambda^{2*}_{md}(\varepsilon)\geq \mathbb{E}(\xi-1)^2 = \mathbb{E}\xi^{'2}$, which implies that $\Xi=\mathrm{U}(\lambda^{*}_{md})$.
Thus, we may write
\begin{align}
\rho(Z_{\widehat{ \mathrm{X}}})=\sup_{
\xi\in \mathrm{U}(\lambda^{*}_{md})
}\langle\xi,Z_{\widehat{ \mathrm{X}}}\rangle.\label{50}
\end{align}
Based on the previous discussion, (\ref{rfp0}) is equivalent to 
\begin{align}  
\min_{\widehat{ \mathrm{X}}\in\mathbb{R}^{d}}
\sup_{\xi\in \mathrm{U}(\lambda^{*}_{md})}
\langle\xi,Z_{\widehat{ \mathrm{X}}}\rangle.\label{drs}
\end{align}
The equality constraint in (\ref{U}) can be absorbed and the constraint set may be re-expressed as the chi-square ball of radius $\rho=\lambda^{2\star}_{md}(\varepsilon)\geq{0}$ centered at the probability measure $\mu$ and formed by the $\chi^{2}-$divergence from $\nu$ to $\mu$:
\vspace{-2pt}$$\chi^{2}_{{\lambda^{2\star}_{md}(\varepsilon)}}(\mu)\equiv \big\{\nu \in \mathcal{L}^{\star}_{2}~:~\mathbb{E}_{\mu}(\mathrm{d}\nu/\mathrm{d}\mu-1)^{2}\leq\lambda_{md}^{2\star}({\varepsilon})\big\}.$$
Subsequently then, (\ref{drs}) reads 
\begin{align}
\min_{\widehat{ \mathrm{X}}\in\mathbb{R}^{d}}\left\{ 
\sup_{\nu \in \mathcal{L}^{\star}_{2}}\mathbb{E}_{\nu}Z_{\widehat{ \mathrm{X}}}~:~\nu\in \chi^{2}_{{\lambda^{2\star}_{md}(\varepsilon)}}(\mu)\right\},\label{drop}
\end{align}
Thus the inner optimization problem in (\ref{drop}) is of a linear objective subject to a quadratic constraint over the infinite dimensional (dual) vector space $\mathcal{L}^{\star}_{2}$, and can be easily handled by variational calculus. In particular, by introducing an additional Lagrange multiplier ---for dualizing the quadratic constraint---, it is easy to show that the supremum is attained at 
\begin{align}
\left(\frac{\mathrm{d}\nu}{\mathrm{d}\mathrm{\mu}}\right)^{\star}=\lambda^{\star}_{md}(\varepsilon)\frac{Z_{\widehat{ \mathrm{X}}}-\mathbb{E}_{\mu}Z_{\widehat{ \mathrm{X}}}}{({\mathrm{var}_{{\mu}}Z_{\widehat{ \mathrm{X}}}})^{1/2}}+1.\label{sm}
\end{align}
Note that the objective in (\ref{drop}) is not convex over $\widehat{\mathrm{X}}$ since the expectation is taken w.r.t.\ signed measures. 
At this point, the reader might find it instructive comparing (\ref{sm}) with (\ref{RN}) through (\ref{lmr}), and ask the question: Is there a subset of admissible Lagrange multipliers for which (\ref{drop}) (and therefore (\ref{rfp0})) can be formulated as a distributionally (over probability measures) robust optimization problem? We answer this question with the following result.

\begin{theorem}[Distributionally robust mean square error esimator]\label{Th}
Let $\Lambda\equiv\{\lambda\geq{0}~:~\lambda\leq 1/\gamma^{\star}\},$ be a subset of admissible Lagrange multipliers associated with the constraint of (\ref{rfp0}), with $\gamma^{\star}$ as in Lemma \ref{weightedlemma}, and let $\nu^{\star}$ being determined by $\mu$ as in (\ref{sm}). Then, $(\nu^{\star}(\widehat{\mathrm{X}}^{\star}),\widehat{\mathrm{X}}^{\star}(\nu^{\star}))$ is a saddle point of $f(\xi,\widehat{\mathrm{X}})$ over $\chi^{2}_{{\gamma^{\star}}}(\mu)\times \mathbb{R}^{d}$. In particular, the estimator $\widehat{\mathrm{X}}^{\star}_{\lambda}$ (see Proposition \ref{prop1})  solves the $\chi^{2}-$DRO problem 
\begin{align}
\underset{\widehat{\mathrm{X}}\in\mathbb{R}^{d}}{\mathrm{min}}~\bigg\{ \sup_{0\leq\nu\in \mathcal{L}^{\star}_{2}}~\mathbb{E}_{\sim \nu}\Vert{ X}-\widehat{ \mathrm{X}}\Vert^{2}_{2}~:~\chi^{2}(\nu||\mu)\leq{\lambda}\bigg\},~\lambda\in \Lambda. \label{dro}
\end{align}
\end{theorem}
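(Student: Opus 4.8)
\emph{Proof plan.} The second assertion follows from the first by a standard argument: once $(\nu^{\star},\widehat{\mathrm{X}}^{\star})$ is shown to be a saddle point of $f$ over $\mathcal{C}\times\mathbb{R}^{d}$, where $\mathcal{C}$ is the nonnegative, unit-mass $\chi^{2}$-ball appearing as the constraint set of (\ref{dro}), the chain $\min_{\widehat{\mathrm{X}}}\sup_{\nu\in\mathcal{C}}f(\nu,\widehat{\mathrm{X}})=f(\nu^{\star},\widehat{\mathrm{X}}^{\star})=\sup_{\nu\in\mathcal{C}}\min_{\widehat{\mathrm{X}}}f(\nu,\widehat{\mathrm{X}})$ shows that $\widehat{\mathrm{X}}^{\star}=\widehat{\mathrm{X}}^{\star}_{\lambda}$ attains the outer minimum in (\ref{dro}). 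Because $f(\nu,\cdot)$ is not convex once $\nu$ is allowed to range over the signed elements of $\mathcal{L}^{\star}_{2}$ (as already noted after (\ref{sm})), no generic minimax theorem applies, so the plan is to verify the two saddle inequalities $f(\nu,\widehat{\mathrm{X}}^{\star})\le f(\nu^{\star},\widehat{\mathrm{X}}^{\star})\le f(\nu^{\star},\widehat{\mathrm{X}})$ directly for all $\nu\in\mathcal{C}$ and all $\widehat{\mathrm{X}}\in\mathbb{R}^{d}$, with $\nu^{\star}=\nu^{\star}(\widehat{\mathrm{X}}^{\star})$ the density (\ref{sm}) evaluated at $\widehat{\mathrm{X}}^{\star}$.

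\emph{The minimization inequality.} First I would make $\nu^{\star}$ explicit. If the variance constraint of (\ref{rfp0}) is inactive the estimator collapses to $\mathbb{E}_{\mu}X$ and the claim is immediate, so assume it is active; then $\mathrm{var}_{\mu}Z_{\widehat{\mathrm{X}}^{\star}}=\varepsilon$ and (\ref{sm}) reads $\mathrm{d}\nu^{\star}/\mathrm{d}\mu=1+(\lambda^{\star}_{md}(\varepsilon)/\sqrt{\varepsilon})\,(Z_{\widehat{\mathrm{X}}^{\star}}-\mathbb{E}_{\mu}Z_{\widehat{\mathrm{X}}^{\star}})$. Invoking Lemma \ref{relationofmultipliers}, $\lambda^{\star}_{md}(\varepsilon)/\sqrt{\varepsilon}=2\lambda^{\star}_{mv}(\varepsilon)$, so this density coincides \emph{exactly} with the Radon--Nikodym derivative (\ref{RN}) for $\lambda=\lambda^{\star}_{mv}(\varepsilon)$. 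Hence Lemma \ref{weightedlemma}, used under the hypothesis $\lambda\in\Lambda$ (that is $\lambda\le 1/\gamma^{\star}$), gives that $\nu^{\star}\ge 0$ is a genuine probability measure and that $\widehat{\mathrm{X}}^{\star}_{\lambda}=\mathbb{E}_{\nu^{\star}}X$. Consequently $\widehat{\mathrm{X}}\mapsto f(\nu^{\star},\widehat{\mathrm{X}})=\mathbb{E}_{\nu^{\star}}\|\widehat{\mathrm{X}}-X\|^{2}_{2}$ is a strictly convex quadratic whose unique minimizer is $\mathbb{E}_{\nu^{\star}}X=\widehat{\mathrm{X}}^{\star}$, which is the right-hand inequality.

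\emph{The maximization inequality.} Fix the estimator at $\widehat{\mathrm{X}}^{\star}$; then $\nu\mapsto f(\nu,\widehat{\mathrm{X}}^{\star})=\langle\nu,Z_{\widehat{\mathrm{X}}^{\star}}\rangle$ is linear, and the computation running through (\ref{U})--(\ref{50}) shows that over the relaxed set $\{\nu\in\mathcal{L}^{\star}_{2}:\mathbb{E}_{\mu}\nu=1,\ \mathbb{E}_{\mu}(\nu-1)^{2}\le\rho\}$ this linear functional is maximized precisely at the $\nu^{\star}$ of (\ref{sm}), with value $\rho(Z_{\widehat{\mathrm{X}}^{\star}})$. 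A short bookkeeping step records that $\chi^{2}(\nu^{\star}\|\mu)=\mathbb{E}_{\mu}(\nu^{\star}-1)^{2}=4\lambda^{\star 2}_{mv}(\varepsilon)\,\mathrm{var}_{\mu}Z_{\widehat{\mathrm{X}}^{\star}}$ equals the radius $\rho$, so $\nu^{\star}$ sits on the boundary of the ball. Since by the previous paragraph $\nu^{\star}\ge 0$ — this is where $\lambda\in\Lambda$ enters a second time — $\nu^{\star}$ is feasible for the smaller, nonnegative constraint set $\mathcal{C}$ of (\ref{dro}); being a maximizer of a linear functional over the larger relaxed set, it is a fortiori a maximizer over $\mathcal{C}$, so $f(\nu,\widehat{\mathrm{X}}^{\star})\le f(\nu^{\star},\widehat{\mathrm{X}}^{\star})$ for every $\nu\in\mathcal{C}$, which is the left-hand inequality.

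\emph{Conclusion and the main obstacle.} The two displayed inequalities are exactly the saddle-point property of $(\nu^{\star}(\widehat{\mathrm{X}}^{\star}),\widehat{\mathrm{X}}^{\star}(\nu^{\star}))$ over the $\chi^{2}$-ball times $\mathbb{R}^{d}$, and the reduction of the first paragraph then yields that $\widehat{\mathrm{X}}^{\star}_{\lambda}$ solves (\ref{dro}). I expect the substantive difficulty to be neither the variational identification of the worst-case density (that is (\ref{sm})) nor the elementary quadratic minimization, but rather the claim that the relaxation from signed measures to honest probability measures is \emph{tight} at $\widehat{\mathrm{X}}^{\star}$, i.e.\ that the optimal density (\ref{sm}) equals the nonnegative re-weighting (\ref{RN}); this is precisely what the restriction $\lambda\le 1/\gamma^{\star}$ of Lemma \ref{weightedlemma} buys, and it is the reason the DRO reformulation is asserted only on the admissible range $\Lambda$ rather than for every level of risk aversion. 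A secondary, more bookkeeping-type care is that, because $f(\nu,\cdot)$ need not be convex over signed $\nu$, the saddle point genuinely has to be exhibited by the two inequalities above rather than deduced from a Sion-type theorem.
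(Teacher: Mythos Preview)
Your proposal is correct and follows essentially the same route as the paper: both arguments verify the two saddle inequalities directly by (i) using that $\nu^{\star}$ from (\ref{sm}) maximizes the linear functional $\nu\mapsto\langle\nu,Z_{\widehat{\mathrm{X}}^{\star}}\rangle$ over the relaxed (signed) ball $\mathrm{U}(\lambda)$, and (ii) invoking Lemma~\ref{relationofmultipliers} to identify (\ref{sm}) at $\widehat{\mathrm{X}}^{\star}$ with (\ref{RN}), then Lemma~\ref{weightedlemma} to secure nonnegativity of $\nu^{\star}$ and the identity $\widehat{\mathrm{X}}^{\star}=\mathbb{E}_{\nu^{\star}}X$, which yields the minimization side. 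Your write-up is in fact more explicit than the paper's in separating the two inequalities and in flagging that the passage from the signed ball to the nonnegative $\chi^{2}$-ball is tight precisely because $\lambda\in\Lambda$ forces $\nu^{\star}\ge 0$.
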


According to Theorem \ref{Th}, when the risk in (\ref{rfp0}) is left relaxed more than $\varepsilon(1/\gamma^{\star})$\footnote{Here $\varepsilon(1/\gamma^{\star})$ denotes the corresponding to $1/\gamma^{\star}$ level of risk.}, (\ref{dro}) can be viewed as a zero-sum game between the statistician (S) who plays $\widehat{\mathrm{X}}$, and its fictitious opponent (A), who chooses $\nu \in \chi^{2}_{{\gamma^{\star}}}(\mu)$. Then, $\widehat{\mathrm{X}}^{\star}_{\lambda}$ is interpreted as the best response of (S) to the best response $\nu^{\star}$ of (A), with the objective $f$ resting on this equilibrium.
\par Then, according to Corollary \ref{coro2}, the optimal strategy $\widehat{\mathrm{X}}^{\star}$ of (S) is to bias 
-up to a change of co-ordinates- towards the areas of high loss (of $\mu$), with a fraction of the Fisher's moment coefficient of skewness plus some additional cross third-order statistics. The directionality in the play of (S) induces a directionality in the play of (A) as we formally discuss in the next section.

\section{Skewness as the $2-$Wasserstein gradient of the\\ $\chi^{2}-$divergence}\label{wasser}
According to the previous discussion, we observe that the shift of the expectation in (\ref{probetter}), and  (\ref{erqp}) 
attributes directionality to the mass re-distribution play of (A) in Theorem  \ref{Th}. In this subsection we make this observation more formal through the smooth structure of the $2-$Wasserstein space $\mathrm{P}_{2,\text {ac}}(\mathbb{R}^d)$ of probability measures absolutely continuous w.r.t.\ Lebesgue measure with finite second moment, equipped with the $2-$Wasserstein metric $W_2$.
In particular, the dynamic formulation of the Wasserstein distance ~\citep{benamou2000computational} entails the definition of a (pseudo) Riemannian structure ~\citep{chewi2023optimization}. At every point $\rho\in\mathrm{P}_{2,\text{ac}}(\mathbb{R}^{d})$, the tangent space $T_{\rho}\mathrm{P}_{2,\text{ac}}(\mathbb{R}^{d}):=\big\{ \dot{\rho}:\mathbb{R}^{d}\rightarrow\mathbb{R}\big|\int\dot{\rho}=0\big\}$ contains the re-allocation directions and it is parameterized by functions $\omega(x)$ on $\mathbb{R}^{d}$, through the elliptic operator $\omega\mapsto-\nabla\cdot(\rho\nabla\omega)$. The metric
$g:\mathrm{T}\mathrm{P}_{2,\text{ac}}(\mathbb{R}^{d})\times
\mathrm{T}\mathrm{P}_{2,\text{ac}}(\mathbb{R}^{d})\rightarrow C^{\infty}(\mathrm{P}_{2,\text{ac}}(\mathbb{R}^{d}))$ \footnote{ By $C^{\infty}(\mathrm{P}_{2,\text{ac}}(\mathbb{R}^{d}))$ we denote the class of smooth real-valued maps defined on $\mathrm{P}_{2,\text{ac}}(\mathbb{R}^{d})$.
}
has value at the point $\rho\in \mathrm{P}_{2,\text{ac}}(\mathbb{R}^{d})$ ~\citep{otto}:
\begin{align}
g( \dot{\rho}_{1},\dot{\rho}_{2})({\rho}):=\int_{\mathbb{R}^{d}} \langle\nabla \omega_{1}(x), \nabla \omega_{2}(x)\rangle \rho(x) d x=\mathbb{E}_{\rho}\langle\nabla \omega_{1}(x), \nabla \omega_{2}(x)\rangle.\nonumber
\end{align}
Moving forward, let $f\in C^{\infty}(\mathrm{P}_{2,\text{ac}}(\mathbb{R}^{d}))$.  Then, the gradient w.r.t.\ the defined metric, $\mathrm{grad}f:\mathrm{P}_{2,\text{ac}}(\mathbb{R}^{d})\rightarrow \mathrm{T}\mathrm{P}_{2,\text{ac}}(\mathbb{R}^{d})$ is the unique vector field satisfying
\begin{align}
    \mathrm{d}f(X)({\rho})=g(\mathrm{grad}f,X)({\rho}), \nonumber
\end{align}
where $\mathrm{d}f:\mathrm{P}_{2,\text{ac}}(\mathbb{R}^{d})\rightarrow \mathrm{T}^{*}\mathrm{P}_{2,\text{ac}}(\mathbb{R}^{d})$ is the differential of $f$. To derive the gradient $(\mathrm{grad}f)(\rho)$, consider the curves $\rho_{t}:[0,1]\rightarrow\mathrm{P}_{2,\text{ac}}(\mathbb{R}^{d})$ progressing through $\rho$ and realize the tangent vector $\dot{\rho}_{t}=\nabla\cdot(\rho_{t}\nabla\omega(x))$ for some $\omega(x)$. It is 
\begin{align}
\mathrm{d}{f}(\dot{\rho})({\rho})&=\lim_{t\rightarrow{0}}\frac{f(\rho+t\dot{\rho}_{t})-f(\rho)}{t} =\int \frac{\delta{f}}{\delta\rho}(x)\dot{\rho}_{t}dx=-\int \frac{\delta{f}}{\delta\rho}(x) \nabla\cdot(\rho\nabla\omega(x))dx\nonumber\\[5pt]
    &=\int \langle\nabla\frac{\delta{f}}{\delta\rho}(x) ,\nabla\omega(x)\rangle\rho dx.\nonumber
\end{align}
We identify $(\mathrm{grad}f)(\rho)\equiv\nabla~\frac{\delta{f}}{\delta\rho}(x)$, or equivalently $(\mathrm{grad}f)(\rho)=-\nabla\cdot(\rho\nabla~\frac{\delta{f}}{\delta\rho}(x))$, where $\frac{\delta{f}}{\delta\rho}$ denotes the first variation of $f$ w.r.t.\ $\rho$.
For the particular case of the $\chi^{2}-$divergence from $\xi$ to $\mu$, we obtain $
\mathrm{grad}\chi^{2}(\cdot\Vert\mu)(\xi)=2\nabla\frac{\mathrm{d}\xi}{\mathrm{d}\mu},
$
which when evaluated on (\ref{sm}) yields
\begin{align}
\mathrm{grad}\chi^{2}(\cdot\Vert\mu)(\nu_{\lambda^{\star}_{md}})=\frac{2\lambda^{\star}_{md}\nabla\Vert\widehat{ \mathrm{X}}-X\Vert^{2}_{2}}{(\mathrm{var}_{\mu}\Vert\widehat{\mathrm{X}}-X\Vert^{2}_{2})^{1/2}}.   \label{grd}
\end{align}
By plugging $\widehat{\mathrm{X}}=\widehat{ \mathrm{X}}^{\star}_{\lambda_{mv}}$ in (\ref{grd}) and subsequently taking expectations, we obtain (for simplicity in the one dimensional case) 
\begin{align}
\mathbb{E}_{\mu} [\mathrm{grad}\chi^{2}(\cdot\Vert\mu)(\nu_{\lambda^{\star}_{md}})]=\frac{\lambda^{2}\sigma\sqrt{\sigma}}{1+\lambda\sigma}\mathbb{E}\left( \frac{ X-\mathbb{E} X}{\sqrt{\sigma}}\right)^3, \lambda\leq 1/2\gamma^{*}. \label{gradxisquaredbetter}
\end{align}
By combining (\ref{fisherform}) and (\ref{gradxisquaredbetter}) we may further write
\begin{align}
\mathbb{E}_{\nu_{\lambda^{\star}_{md}}} X-\mathbb{E}_{\mu} X  =\frac{\mathbb{E}_{\mu}\mathrm{grad}\chi^{2}(\cdot\Vert\mu)(\nu_{\lambda^{\star}_{md}})}{\lambda},~ \lambda\in (0,1/2\gamma^{*}).\nonumber
\end{align}
Equation (\ref{gradxisquaredbetter}) characterizes infinitesimally the optimal strategy of (A): Given the optimal play of (S), (A) changes the measure in (\ref{drop}) aiming to increase the $\chi^{2}-$divergence from $\nu$ to $\mu$. 
For a more comprehensive review in the theory of optimal transport and applications we refer the reader to \citep{ambrosio2005gradient, villani2009optimal, wibisono2018sampling, figalli2021invitation,  chewi2023optimization}.

\section{Mean squared error of the empirical distributionally robust mse estimator}\label{generalizationsec}
Now that the risk-constrained estimator has all the desired features, we want to study its performance w.r.t.\ the mean squared error. We do so in two steps. First, we consider a simplified version of the empirical distributionally robust estimator and we show that it outperforms the empirical average over a rather large class of models. Second (see Appendix \ref{generalization}), we argue that with high probability this estimator is the empirical version of the distributionally robust estimator (\ref{fisherform}). The next results refer to the one dimensional case ($d=1$), with the extension to other dimensions left open for future exploration.

\par By considering (\ref{fisherform}) w.r.t.\ $\mu_{\mathrm{n}}$, we define the following algorithm
\begin{align}
\widehat{\mathrm{X}}^{\star}_{n,\bar{\lambda}}(D)\equiv\frac{1}{n}\sum_{i} X_{i}+\bar{\lambda}\frac{1}{n}\sum_{i}\Big({ X_{i}-n^{-1}\sum_{i} X_{i}}\Big)^3, \label{simpempiricalbetter}
\end{align}
where $\bar{\lambda}$ is a non-negative and deterministic parameter that replaces the first factor in the second term of (\ref{fisherform}).  
For ease of notation, let us first denote $\bar{\mathrm{ X}}_{n}(D)=n^{-1}\sum_{i} X_{i}$, $\mathrm{T}_{n}(D)=n^{-1}\sum_{i}( X_{i}-\bar{ \mathrm{X}}_{n})^3$. We call (\ref{simpempiricalbetter}) the simplified empirical distributionally robust estimator, which after the above notation reads $\widehat{\mathrm{X}}^{\star}_{n,\bar{\lambda}}=\bar{\mathrm{ X}}_{n}+\bar{\lambda}\mathrm{T}_{n}$, and achieves mean squared error 
\begin{align}
        \mathbb{E}(\widehat{\mathrm{X}}^{\star}_{n,\bar{\lambda}}- X)^2
        &\leq \mathbb{E}(\bar{\mathrm{X}}_{n}- X)^2+\bar{\lambda}^2(\mathbb{E}\mathrm{T}^{2}_{n}+\epsilon)-2\bar{\lambda}\mathbb{E}\big[( X-\bar{ \mathrm{X}}_{n})\mathrm{T}_{n}\big], \label{bounddomination}
\end{align}
for any $\epsilon>0$. Strict domination follows for all those positive $\bar{\lambda}$ such that \vspace{-3pt}
\begin{align}
\bar{\lambda}<{2\mathbb{E}\big[( X- \bar{\mathrm{X}}_{n})\mathrm{T}_{n}\big]}/({\mathbb{E}\mathrm{T}^{2}_{n}+\epsilon}), \label{in} 
\end{align}
only under reasonable conditions that render the right-hand side of (\ref{in}) strictly positive. 
Let $Q$ be the set of all probability measures with finite second order moment and $g:Q\rightarrow \mathbb{R}$, the real-valued map with $g(\mu)\equiv 3m^{2}_{2}-m_{4}$, where $m_{2}$ and $m_{4}$ is the second and fourth central moment of $\mu$, respectively. Note that the Gaussian measures are all in $g^{-1}(0)$, and define the set
$\mathrm{N}\equiv g^{-1}((0,+\infty))\subset Q$.  Strict domination of (\ref{simpempiricalbetter}) is demonstrated by the following 
\begin{theorem}[Strict domination] \label{theorem}Let $\widehat{ \mathrm{X}}^{\star}_{n,\bar{\lambda}}$ be as in (\ref{simpempiricalbetter}), and $\bar{\mathrm{ X}}_{n}$ be the empirical average. Fix an $\epsilon>0$. Then, for probability measures $\mu\in \mathrm{N}$, and for any $n\geq 3$,
\begin{align}
\mathbb{E}|\widehat{ \mathrm{X}}^{\star}_{n,\bar{\lambda}}- \mathbb{E}X|^2<\mathbb{E}|\bar{\mathrm{X}}_{n}- \mathbb{E}X|^2, \label{gn}
\end{align}
for all $\bar{\lambda}< \frac{1}{n}\left(\frac{8g(\mu)}{\mathbb{E}\mathrm{T}^{2}_{n}+\epsilon}\right)$.
\end{theorem}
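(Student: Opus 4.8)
The plan is to treat $\mathbb{E}(\widehat{\mathrm{X}}^{\star}_{n,\bar{\lambda}}-\mathbb{E}X)^2$ as a quadratic in $\bar{\lambda}$, using $\widehat{\mathrm{X}}^{\star}_{n,\bar{\lambda}}=\bar{\mathrm{X}}_n+\bar{\lambda}\mathrm{T}_n$ from (\ref{simpempiricalbetter}), and to locate the interval of $\bar{\lambda}$ on which this quadratic lies strictly below $\mathbb{E}(\bar{\mathrm{X}}_n-\mathbb{E}X)^2$. Since $\widehat{\mathrm{X}}^{\star}_{n,\bar{\lambda}}-\mathbb{E}X$, $\bar{\mathrm{X}}_n-\mathbb{E}X$ and $g(\mu)=3m_2^2-m_4$ are all invariant under a simultaneous translation of $\mu$ and of the sample, I first reduce to the centered case $\mathbb{E}X=0$, so that $m_2,m_4$ become ordinary moments. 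Expanding the square and taking expectations gives $\mathbb{E}(\widehat{\mathrm{X}}^{\star}_{n,\bar{\lambda}}-\mathbb{E}X)^2-\mathbb{E}(\bar{\mathrm{X}}_n-\mathbb{E}X)^2 = 2\bar{\lambda}\,\mathbb{E}[(\bar{\mathrm{X}}_n-\mathbb{E}X)\mathrm{T}_n]+\bar{\lambda}^{2}\,\mathbb{E}\mathrm{T}_n^{2}$, which, after replacing $\mathbb{E}\mathrm{T}_n^{2}$ by the slightly larger $\mathbb{E}\mathrm{T}_n^{2}+\epsilon$ as in (\ref{bounddomination}) (the slack $\epsilon>0$ keeps the resulting threshold strict and lets the claim remain correct --- indeed vacuous --- when $\mathbb{E}\mathrm{T}_n^{2}=+\infty$), is strictly negative precisely for $0<\bar{\lambda}<-2\,\mathbb{E}[(\bar{\mathrm{X}}_n-\mathbb{E}X)\mathrm{T}_n]/(\mathbb{E}\mathrm{T}_n^{2}+\epsilon)$, provided this cross-moment is negative.

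The heart of the proof is therefore the exact evaluation of $\mathbb{E}[\bar{\mathrm{X}}_n\mathrm{T}_n]$. I would start from the polynomial identity $\sum_j(X_j-\bar{\mathrm{X}}_n)^3=\sum_jX_j^3-3\bar{\mathrm{X}}_n\sum_jX_j^2+2n\bar{\mathrm{X}}_n^3$, so that $\mathrm{T}_n=\frac{1}{n}\sum_jX_j^3-3\bar{\mathrm{X}}_n\big(\frac{1}{n}\sum_jX_j^2\big)+2\bar{\mathrm{X}}_n^3$, and then compute, under i.i.d.-ness and $\mathbb{E}X=0$ (so that every monomial in which some index occurs exactly once averages to zero), the three elementary moments $\mathbb{E}\big[\bar{\mathrm{X}}_n\cdot\frac{1}{n}\sum_jX_j^3\big]=m_4/n$, $\mathbb{E}\big[\bar{\mathrm{X}}_n^2\cdot\frac{1}{n}\sum_jX_j^2\big]=(m_4+(n-1)m_2^2)/n^{2}$ and $\mathbb{E}\bar{\mathrm{X}}_n^4=(m_4+3(n-1)m_2^2)/n^{3}$, by the usual bookkeeping over index-coincidence patterns. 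Assembling them over the common denominator $n^{3}$, the mixed ($m_3$ and cross) contributions cancel and both surviving groups carry the factor $(n-1)(n-2)$, yielding the clean identity $\mathbb{E}[(\bar{\mathrm{X}}_n-\mathbb{E}X)\mathrm{T}_n]=\frac{(n-1)(n-2)}{n^{3}}(m_4-3m_2^2)=-\frac{(n-1)(n-2)}{n^{3}}\,g(\mu)$. This is exactly the point where the hypothesis $\mu\in\mathrm{N}$, i.e.\ $g(\mu)>0$, is used: together with $n\ge 3$ it forces the cross-moment to be strictly negative.

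Putting the pieces together, for $\mu\in\mathrm{N}$ and $n\ge 3$ the quadratic $\bar{\lambda}\mapsto 2\bar{\lambda}\,\mathbb{E}[(\bar{\mathrm{X}}_n-\mathbb{E}X)\mathrm{T}_n]+\bar{\lambda}^{2}(\mathbb{E}\mathrm{T}_n^{2}+\epsilon)$ is strictly negative on $\big(0,\ \frac{2(n-1)(n-2)\,g(\mu)}{n^{3}(\mathbb{E}\mathrm{T}_n^{2}+\epsilon)}\big)$, which via (\ref{bounddomination}) delivers (\ref{gn}); a crude lower bound of the form $\frac{2(n-1)(n-2)}{n^{3}}\ge \frac{c}{n}$ valid for all $n\ge 3$ then rewrites this as the threshold $\bar{\lambda}<\frac{1}{n}\big(\frac{c\,g(\mu)}{\mathbb{E}\mathrm{T}_n^{2}+\epsilon}\big)$ of the stated form, exhibiting $O(g(\mu)/n)=O(|\kappa|/n)$ dependence. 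The main obstacle is the bookkeeping in the cross-moment computation --- enumerating the index-coincidence patterns correctly and verifying that the coefficients collapse to the single factor $(n-1)(n-2)\,g(\mu)$; a secondary point to dispatch cleanly is the integrability of $\mathrm{T}_n^{2}$ (which a priori needs sixth moments, beyond Assumption \ref{as1}), handled as noted by the $\epsilon$-slack and by observing that (\ref{gn}) is vacuous once the threshold degenerates to zero.
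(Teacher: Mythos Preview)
Your proposal is correct and follows the same route as the paper's proof: both reduce strict domination to evaluating the cross-moment $\mathbb{E}[(\bar{\mathrm{X}}_n-\mathbb{E}X)\mathrm{T}_n]$ and arrive at the key identity $-\tfrac{(n-1)(n-2)}{n^{3}}\,g(\mu)$. Your preliminary reduction to $\mathbb{E}X=0$ streamlines the bookkeeping considerably---the paper does not center and instead computes the two pieces $\mathbb{E}\big[X\sum_i(X_i-\bar{\mathrm{X}}_n)^3\big]$ and $\mathbb{E}\big[\bar{\mathrm{X}}_n\sum_i(X_i-\bar{\mathrm{X}}_n)^3\big]$ separately, carrying all the raw-moment cross terms (including those involving $\mathbb{E}X$ and $\mathbb{E}X^{3}$) through several displays before they cancel. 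Your closing remark that one needs a \emph{lower} bound $\tfrac{2(n-1)(n-2)}{n^{3}}\ge\tfrac{c}{n}$ (attained with $c=\tfrac{4}{9}$ at $n=3$) to pass to a threshold of the stated $O(1/n)$ form is also the logically correct direction; the paper's concluding display records only the upper bound $\tfrac{2(n-1)(n-2)}{n^{3}}\le\tfrac{8}{n}$, which by itself does not justify the specific constant $8$ in the theorem statement.
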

As mentioned earlier, Gaussian measures are among those with $3m^{2}_{2}=m_{4}$. In this case $\bar{\lambda}=0$, the condition (\ref{bounddomination}) is satisfied with equality for any $n \geq {1}$, and the mse performance of (\ref{simpempiricalbetter}) reduces to that of the empirical average. On the contrary, when the data are generated from $\mu\in \mathrm{N}$, (\ref{simpempiricalbetter}) outperforms the average as soon as the parameter $\bar{\lambda}$ is chosen inside the prescribed interval. 
Interestingly, domination remains feasible for arbitrarily large data sets as soon as $\bar{\lambda}$ is sent to zero with a rate of at least $O(1/n)$. The distributionally robust estimator achieves better mse performance compared to the sample mean by leveraging its bias term. Further, if $\bar{\lambda}\mapsto \varphi(\bar{\lambda})\equiv \mathbb{E}|\widehat{ \mathrm{X}}^{\star}_{n,\bar{\lambda}}- X|^2$, then the slope of $\varphi$ at zero 
\begin{align}
\varphi^{\prime}(0)=-(3m_{2}^2-m_{4})\frac{n^{2}-3n+2}{n^{3}}\label{slope} 
\end{align}
It is $\varphi^{\prime}(0)\leq{0}$ for all $\mu\in \mathrm{N}\cup g^{-1}(0)$, and the maximum value is attained for $\mu \in g^{-1}(0)$. In that case of course the best choice is $\bar{\lambda}=0$. On the contrary, for all those probability measures $\mu\in \mathrm{N}$ strict domination follows for (see also Appendix \ref{generalization}):
\begin{align}
\bar{\lambda}< \frac{2(3m^{2}_{2}-m_{4})}{\mathbb{E}\mathrm{T}^{2}_{n}+\epsilon}\frac{n^{2}-3n+2}{n^3}=\frac{-\varphi^{\prime}(0)}{\mathbb{E}\mathrm{T}^{2}_{n}+\epsilon}.  
\end{align}
The more negative the slope at zero is, the wider the feasible set becomes. A very negative slope at zero permits large improvement in the mse performance even for very small values of $\bar{\lambda}$ thus rendering, at the same time, the estimator as unbiased as possible. Since the slope at zero is determined by $-(3m^{2}_{2}-m_{4})$, those models with large $|g(\mu)|=|3m^{2}_{2}-m_{4}|$ trade more favorably mse performance for unbiasedness. That said, for models with large value of $g$, values of $\bar{\lambda}$ close to zero achieve a non-trivial performance difference even for large number of samples. However, on the basis of (\ref{slope}), large data sets tend to flatten out $\varphi$ at zero, and to maintain performance $\bar{\lambda}$ has to be increased to 
\begin{align}
    \bar{\lambda}^{\star}_{n}=\mathrm{argmin}\{\varphi(\bar{\lambda}),~\bar{\lambda}\leq -\varphi^{\prime}(0)/({\mathbb{E}\mathrm{T}^{2}_{n}+\varepsilon})\},
\end{align}
thus rendering $\widehat{\mathrm{X}}^{\star}_{n,\bar{\lambda}^{\star}_{n}}$ more biased. Lastly, $\bar{\lambda}^{\star}_{n}$  approaches zero as the feasible set shrinks with a rate of at least $O(1/n)$ pushing  $\widehat{\mathrm{X}}^{\star}_{n,\bar{\lambda}^{\star}_{n}}$ to the empirical average.

\section{Conclusion and future work}
We deployed the main insights of DRO to come up with a distributionally robust estimator; its empirical version serving as a better estimator for the mean compared to the empirical average w.r.t.\ the mean squared error for all platykurtic probability distributions. The aforesaid estimator biases-up to a change of coordinates-towards the tail of the distribution with the Fisher's moment coefficient of skewness plus additional cross third order statistics and on top of that, it can be written as an expectation w.r.t.\ an optimal re-weighting of the original measure. This optimal re-weighting along with the optimal estimator form a saddle point of the mse cost and the bias characterizes the directionality of the infinitesimal play through the Wasserstein geometry of the space of measures. Lastly, extending Theorem \ref{theorem} in the multi-dimensional setting is left for future investigation.

\newpage

\bibliographystyle{plainnat} 
\bibliography{refs}

\begin{thebibliography}{36}
\providecommand{\natexlab}[1]{#1}
\providecommand{\url}[1]{\texttt{#1}}
\expandafter\ifx\csname urlstyle\endcsname\relax
  \providecommand{\doi}[1]{doi: #1}\else
  \providecommand{\doi}{doi: \begingroup \urlstyle{rm}\Url}\fi

\bibitem[Abeille et~al.(2016)Abeille, Lazaric, Brokmann, et~al.]{abeille2016lqg}
Marc Abeille, Alessandro Lazaric, Xavier Brokmann, et~al.
\newblock Lqg for portfolio optimization.
\newblock \emph{arXiv preprint arXiv:1611.00997}, 2016.

\bibitem[Ambrosio et~al.(2005)Ambrosio, Gigli, and Savar{\'e}]{ambrosio2005gradient}
Luigi Ambrosio, Nicola Gigli, and Giuseppe Savar{\'e}.
\newblock \emph{Gradient flows: in metric spaces and in the space of probability measures}.
\newblock Springer Science \& Business Media, 2005.

\bibitem[Bayraksan and Love(2015)]{bayraksan2015data}
G{\"u}zin Bayraksan and David~K Love.
\newblock Data-driven stochastic programming using phi-divergences.
\newblock In \emph{The operations research revolution}, pages 1--19. INFORMS, 2015.

\bibitem[Benamou and Brenier(2000)]{benamou2000computational}
Jean-David Benamou and Yann Brenier.
\newblock A computational fluid mechanics solution to the {M}onge-{K}antorovich mass transfer problem.
\newblock \emph{Numerische Mathematik}, 84\penalty0 (3):\penalty0 375--393, 2000.

\bibitem[Bertsimas and Sim(2004)]{bertsimas2004price}
Dimitris Bertsimas and Melvyn Sim.
\newblock The price of robustness.
\newblock \emph{Operations research}, 52\penalty0 (1):\penalty0 35--53, 2004.

\bibitem[Blanchet and Shapiro(2023)]{blanchet2023statistical}
Jose Blanchet and Alexander Shapiro.
\newblock Statistical limit theorems in distributionally robust optimization.
\newblock In \emph{2023 Winter Simulation Conference (WSC)}, pages 31--45. IEEE, 2023.

\bibitem[Blanchet et~al.(2021)Blanchet, Murthy, and Nguyen]{blanchet2021statistical}
Jose Blanchet, Karthyek Murthy, and Viet~Anh Nguyen.
\newblock Statistical analysis of wasserstein distributionally robust estimators.
\newblock In \emph{Tutorials in Operations Research: Emerging Optimization Methods and Modeling Techniques with Applications}, pages 227--254. INFORMS, 2021.

\bibitem[Blanchet et~al.(2023)Blanchet, Kuhn, Li, and Taskesen]{blanchet2023unifying}
Jose Blanchet, Daniel Kuhn, Jiajin Li, and Bahar Taskesen.
\newblock Unifying distributionally robust optimization via optimal transport theory.
\newblock \emph{arXiv preprint arXiv:2308.05414}, 2023.

\bibitem[Blanchet et~al.(2024)Blanchet, Li, Lin, and Zhang]{blanchet2024distributionally}
Jose Blanchet, Jiajin Li, Sirui Lin, and Xuhui Zhang.
\newblock Distributionally robust optimization and robust statistics.
\newblock \emph{arXiv preprint arXiv:2401.14655}, 2024.

\bibitem[Chewi(2023)]{chewi2023optimization}
Sinho Chewi.
\newblock \emph{An optimization perspective on log-concave sampling and beyond}.
\newblock PhD thesis, Massachusetts Institute of Technology, 2023.

\bibitem[Devroye et~al.(2003)Devroye, Sch{\"a}fer, Gy{\"o}rfi, and Walk]{devroye2003estimation}
Luc Devroye, Dominik Sch{\"a}fer, L{\'a}szl{\'o} Gy{\"o}rfi, and Harro Walk.
\newblock The estimation problem of minimum mean squared error.
\newblock \emph{Statistics \& Decisions}, 21\penalty0 (1):\penalty0 15--28, 2003.

\bibitem[Duchi and Namkoong(2018)]{duchi2018learning}
John Duchi and Hongseok Namkoong.
\newblock Learning models with uniform performance via distributionally robust optimization.
\newblock \emph{arXiv preprint arXiv:1810.08750}, 2018.

\bibitem[Duchi and Namkoong(2019)]{duchi2019variance}
John Duchi and Hongseok Namkoong.
\newblock Variance-based regularization with convex objectives.
\newblock \emph{Journal of Machine Learning Research}, 20\penalty0 (68):\penalty0 1--55, 2019.

\bibitem[Esfahani and Kuhn(2015)]{esfahani2015data}
Peyman~Mohajerin Esfahani and Daniel Kuhn.
\newblock Data-driven distributionally robust optimization using the wasserstein metric: Performance guarantees and tractable reformulations.
\newblock \emph{arXiv preprint arXiv:1505.05116}, 2015.

\bibitem[Figalli and Glaudo(2021)]{figalli2021invitation}
Alessio Figalli and Federico Glaudo.
\newblock \emph{An invitation to optimal transport, Wasserstein distances, and gradient flows}.
\newblock 2021.

\bibitem[Gao and Kleywegt(2023)]{gao2023distributionally}
Rui Gao and Anton Kleywegt.
\newblock Distributionally robust stochastic optimization with wasserstein distance.
\newblock \emph{Mathematics of Operations Research}, 48\penalty0 (2):\penalty0 603--655, 2023.

\bibitem[Girshick and Savage(1951)]{girshick1951bayes}
MA~Girshick and LJ~Savage.
\newblock Bayes and minimax estimates for quadratic loss functions.
\newblock In \emph{Proceedings of the Second Berkeley Symposium on Mathematical Statistics and Probability}, volume~2, pages 53--74. University of California Press, 1951.

\bibitem[Gotoh et~al.(2018)Gotoh, Kim, and Lim]{gotoh2018robust}
Jun-ya Gotoh, Michael~Jong Kim, and Andrew~EB Lim.
\newblock Robust empirical optimization is almost the same as mean--variance optimization.
\newblock \emph{Operations research letters}, 46\penalty0 (4):\penalty0 448--452, 2018.

\bibitem[James and Stein(1961)]{james1961estimation}
William James and Charles Stein.
\newblock Estimation with quadratic loss.
\newblock In \emph{Breakthroughs in statistics: Foundations and basic theory}, pages 443--460. Springer, 1961.

\bibitem[Kalogerias et~al.(2019)Kalogerias, Chamon, Pappas, and Ribeiro]{kalogerias2019risk}
Dionysios~S Kalogerias, Luiz~FO Chamon, George~J Pappas, and Alejandro Ribeiro.
\newblock Risk-aware mmse estimation.
\newblock \emph{arXiv preprint arXiv:1912.02933}, 2019.

\bibitem[Kalogerias et~al.(2020)Kalogerias, Chamon, Pappas, and Ribeiro]{kalogerias2020better}
Dionysios~S Kalogerias, Luiz~FO Chamon, George~J Pappas, and Alejandro Ribeiro.
\newblock Better safe than sorry: Risk-aware nonlinear bayesian estimation.
\newblock In \emph{ICASSP 2020-2020 IEEE international conference on acoustics, speech and signal processing (ICASSP)}, pages 5480--5484. IEEE, 2020.

\bibitem[Kuhn et~al.(2019)Kuhn, Esfahani, Nguyen, and Shafieezadeh-Abadeh]{kuhn2019wasserstein}
Daniel Kuhn, Peyman~Mohajerin Esfahani, Viet~Anh Nguyen, and Soroosh Shafieezadeh-Abadeh.
\newblock Wasserstein distributionally robust optimization: Theory and applications in machine learning.
\newblock In \emph{Operations research \& management science in the age of analytics}, pages 130--166. Informs, 2019.

\bibitem[Lam(2016)]{lam2016robust}
Henry Lam.
\newblock Robust sensitivity analysis for stochastic systems.
\newblock \emph{Mathematics of Operations Research}, 41\penalty0 (4):\penalty0 1248--1275, 2016.

\bibitem[Lee and Raginsky(2018)]{lee2018minimax}
Jaeho Lee and Maxim Raginsky.
\newblock Minimax statistical learning with wasserstein distances.
\newblock \emph{Advances in Neural Information Processing Systems}, 31, 2018.

\bibitem[Markowitz(1952)]{e5a1bb8f-41b7-35c6-95cd-8b366d3e99bc}
Harry Markowitz.
\newblock Portfolio selection.
\newblock \emph{The Journal of Finance}, 7\penalty0 (1):\penalty0 77--91, 1952.
\newblock ISSN 00221082, 15406261.
\newblock URL \url{http://www.jstor.org/stable/2975974}.

\bibitem[Namkoong and Duchi(2016)]{namkoong2016stochastic}
Hongseok Namkoong and John~C Duchi.
\newblock Stochastic gradient methods for distributionally robust optimization with f-divergences.
\newblock \emph{Advances in neural information processing systems}, 29, 2016.

\bibitem[Nguyen et~al.(2023)Nguyen, Shafieezadeh-Abadeh, Kuhn, and Mohajerin~Esfahani]{nguyen2023bridging}
Viet~Anh Nguyen, Soroosh Shafieezadeh-Abadeh, Daniel Kuhn, and Peyman Mohajerin~Esfahani.
\newblock Bridging bayesian and minimax mean square error estimation via wasserstein distributionally robust optimization.
\newblock \emph{Mathematics of Operations Research}, 48\penalty0 (1):\penalty0 1--37, 2023.

\bibitem[Otto(2001)]{otto}
Felix Otto.
\newblock The geometry of dissipative evolution equations: The porous medium equation.
\newblock \emph{Communications in Partial Differential Equations}, 26\penalty0 (1-2):\penalty0 101--174, 2001.

\bibitem[Scarf et~al.(1957)Scarf, Arrow, and Karlin]{scarf1957min}
Herbert~E Scarf, KJ~Arrow, and S~Karlin.
\newblock \emph{A min-max solution of an inventory problem}.
\newblock Rand Corporation Santa Monica, 1957.

\bibitem[Shapiro et~al.(2021)Shapiro, Dentcheva, and Ruszczynski]{shapiro2021lectures}
Alexander Shapiro, Darinka Dentcheva, and Andrzej Ruszczynski.
\newblock \emph{Lectures on stochastic programming: modeling and theory}.
\newblock SIAM, 2021.

\bibitem[Staib and Jegelka(2019)]{staib2019distributionally}
Matthew Staib and Stefanie Jegelka.
\newblock Distributionally robust optimization and generalization in kernel methods.
\newblock \emph{Advances in Neural Information Processing Systems}, 32, 2019.

\bibitem[Van~Parys et~al.(2021)Van~Parys, Esfahani, and Kuhn]{van2021data}
Bart~PG Van~Parys, Peyman~Mohajerin Esfahani, and Daniel Kuhn.
\newblock From data to decisions: Distributionally robust optimization is optimal.
\newblock \emph{Management Science}, 67\penalty0 (6):\penalty0 3387--3402, 2021.

\bibitem[Verhaegen and Verdult(2007)]{verhaegen2007filtering}
Michel Verhaegen and Vincent Verdult.
\newblock \emph{Filtering and system identification: a least squares approach}.
\newblock Cambridge university press, 2007.

\bibitem[Villani et~al.(2009)]{villani2009optimal}
C{\'e}dric Villani et~al.
\newblock \emph{Optimal transport: old and new}, volume 338.
\newblock Springer, 2009.

\bibitem[Wang and Bovik(2009)]{wang2009mean}
Zhou Wang and Alan~C Bovik.
\newblock Mean squared error: Love it or leave it? a new look at signal fidelity measures.
\newblock \emph{IEEE signal processing magazine}, 26\penalty0 (1):\penalty0 98--117, 2009.

\bibitem[Wibisono(2018)]{wibisono2018sampling}
Andre Wibisono.
\newblock Sampling as optimization in the space of measures: The langevin dynamics as a composite optimization problem.
\newblock In \emph{Conference on Learning Theory}, pages 2093--3027. PMLR, 2018.

\end{thebibliography}
\appendix

\section{Appendix A (Proofs of Section \ref{sec2})}\label{appixA}
\begin{proof}[Proof of Lemma \ref{lem1}]
The square of the constraint of (\ref{rfp0}) is written as\footnote{All expectations are taken w.r.t.\ $\mu$.}:
\begin{align}
\Big(\|{ X}-\widehat{\mathrm{ X}}\|^{2}_{2}-\mathbb{E} 
||{ X}-\widehat{\mathrm{ X}}||^{2}_{2}
\Big)^{2}
  &=(\|{ X}\|^{2}_{2}+\mathbb{E} \|{ X}\|^{2}_{2})^{2}+4\widehat{ \mathrm{X}}^{\top}({ X}-\mathbb{E} X)({ X}-\mathbb{E} X)^{\top}\widehat{ \mathrm{X}}\nonumber\\[2pt]&\hspace{30pt}+4(|| X||^{2}_{2}
  -\mathbb{E}\{|| X||^{2}_{2}\})(\mathbb{E} X-{ X})^{\top}\widehat{ \mathrm{X}}
\nonumber\\[2pt]&=
\left[\begin{array}{cc}
1~ &~ \widehat{\mathrm{ X}}^{\top}
\end{array}\right]
\left[\begin{array}{ll}
\alpha({ X}) & \zeta({ X})^{\top} \\
\zeta({ X}) & \gamma({ X})
\end{array}\right]
\left[~\begin{array}{c}
1 \\ \widehat{\mathrm{ X}}
~\end{array}\right], \label{eft}
\end{align}
where 
\begin{align}
\alpha({ X})&=(||{ X}||^{2}_{2}-\mathbb{E}\| X\|^{2}_{2})^{2}  \nonumber   \\[2pt]    
\zeta({ X})&=2(\|{ X}\|_2^2-\mathbb{E}\{\|{ X}\|_2^2 \})(\mathbb{E}{ X} -{ X})\nonumber
\\[2pt]
\gamma({ X})&=4({ X}-\mathbb{E} X)( X-\mathbb{E} X)^{\top}.\nonumber
\end{align}
Thus, by taking expectations
\begin{align}
&\mathbb{E}
\Big(||{ X}-\widehat{\mathrm{ X}}||^{2}_{2}-\mathbb{E}_{\mu} 
||{ X}-\widehat{\mathrm{ X}}||^{2}_{2}
\Big)^{2}=
\left[\begin{array}{cc}
1~ &~ \widehat{\mathrm{ X}}^{\top}
\end{array}\right]
\left[\begin{array}{ll}
\alpha & \zeta^{\top} \\
\zeta& \Gamma
\end{array}\right]
\left[~\begin{array}{c}
1 \\ \widehat{\mathrm{ X}}
~\end{array}\right],\label{oxt}
\end{align}
where $\alpha=\mathbb{E} \alpha({ X}) $, $\zeta=\mathbb{E}\zeta({ X})$, and $\Gamma=4\Sigma$.
Equation (\ref{oxt}) rests on the permutation-invariance property of the $\mathrm{trace}$.
Without loss of generality, let us assume that the covariance matrix $\Sigma$ is strictly positive, and consider its Schur complement w.r.t.\ $\mathrm{\Gamma}$ thus obtaining the following standard factorization~(see e.g.~\citep[Lemma 2.3]{verhaegen2007filtering}):
\begin{align}
&\mathbb{E}
\big(\Vert{ X}-\widehat{\mathrm{ X}}\Vert^{2}_{2}-\mathbb{E} 
\Vert{ X}-\widehat{\mathrm{ X}}\Vert^{2}_{2}
\big)^{2}=\left[\begin{array}{c}
1\\~\Gamma^{-1}\zeta-\widehat{ \mathrm{X}}
\end{array}\right]^{\top}\left[\begin{array}{ll}
\alpha-\zeta^{\top}\Gamma^{-1}\zeta & ~{0}\\0& \Gamma
\end{array}\right]\left[\begin{array}{c}
1\\~\Gamma^{-1}\zeta-\widehat{ \mathrm{X}}
\end{array}\right]_{\boldsymbol{.}}
\label{enea}
\end{align}
\end{proof}

\begin{proof}[Proof of Proposition \ref{prop1}]
From Lemma \ref{lem1}, problem (\ref{rfp0}) can equivalently be written as:
\begin{equation}
\begin{array}{rl}
\underset{\widehat{\mathrm{ X}}\in\mathbb{R}^{d}}{\mathrm{min}} & \Vert\widehat{\mathrm{ X}}-\mathbb{E} X\Vert^{2}_{2}\\
\mathrm{s.t.} & \Vert\widehat{\mathrm{ X}}-\frac{1}{4}{\Sigma}^{-1}\zeta^{-}\Vert^{2}_{4{\Sigma}}\leq\bar{\varepsilon}
\end{array},~ \bar{\varepsilon}\geq0~=~ \underset{{\widehat{\mathrm{ X}}\in\mathbb{R}^{d}}}{\mathrm{min}}\sup_{\lambda \geq {0}}\mathrm{L}(\widehat{\mathrm{ X}},\lambda),\nonumber
\end{equation}
where $\mathrm{L}:\mathbb{R}^{d}\times\mathbb{R}^{+}\rightarrow\mathbb{R}$ is the Lagrangean of (\ref{rfp3}) given by
\begin{align}
    \mathrm{L}(\widehat{\mathrm{ X}},\lambda)=\Vert\widehat{\mathrm{ X}}-\mathbb{E} X\Vert^{2}_{2}+
\lambda~\Vert\widehat{\mathrm{ X}}-\frac{1}{4}{\Sigma}^{-1}\zeta^{-}\Vert^{2}_{4{\Sigma}}-\lambda\bar{\varepsilon},\label{lag}
\end{align}
and $\lambda\in\mathbb{R}^{+}$ is the multiplier associated with the constraint of (\ref{rfp3}). Slater's condition is directly verified from (\ref{rfp3}) and therefore,  due to convexity, the dual-optimal value is attained, and (\ref{rfp3}) has zero duality gap:
\begin{align}
\underset{\widehat{\mathrm{ X}}\in\mathbb{R}^{d}}{\mathrm{min}}\sup_{\lambda \geq {0}}\mathrm{L}(\widehat{\mathrm{ X}},\lambda)=\sup_{\lambda \geq {0}}\underset{{\widehat{\mathrm{ X}}\in\mathbb{R}^{d}}}{\mathrm{min}}\mathrm{L}(\widehat{\mathrm{ X}},\lambda).\nonumber
\end{align}
The Lagrangian reads 
\begin{align}
&\mathrm{L}(\widehat{\mathrm{ X}},\lambda)=\left[\begin{array}{cc}
\widehat{\mathrm{ X}}^{\top}~ &~ 1
\end{array}\right]
\hspace{-4pt}\left[\begin{array}{ll}
{I}+4\lambda \Sigma~~&~~ -\big(\mathbb{E} X+\lambda\zeta^{-}\big) \\
-\big(\mathbb{E} X+\lambda\zeta^{-} \big)^{\top}&  ||\mathbb{E} X||^{2}_{2}+\lambda ||\frac{1}{4}{\Sigma}^{-1}\zeta^{-}||^{2}_{2}
\end{array}\right]
\hspace{-4pt}\left[~\begin{array}{c}
\widehat{\mathrm{ X}} \\ 1
~\end{array}\right]-\lambda\bar{\varepsilon},\nonumber
\end{align}
and by performing a similar decomposition as in (\ref{enea}) we obtain
\begin{align}
\mathrm{L}(\widehat{\mathrm{ X}},\lambda)&= \Vert \widehat{\mathrm{ X}}-({I}+4\lambda {\Sigma})^{-1}(\mathbb{E}  X+\lambda{\zeta}^{-})\Vert^{2}_{I+4\lambda{\Sigma}}\nonumber\\[2pt]
&-\big(\mathbb{E}X+\lambda\zeta^{-} \big)^{\top}\big( {I}+4\lambda\Sigma\big)^{-1}\big(\mathbb{E} X+\lambda\zeta^{-} \big)\nonumber\\[2pt]
&+\Vert\mathbb{E} X\Vert^{2}_{2}+\lambda \Vert\frac{1}{4}{\Sigma}^{-1}{\zeta}^{-}\Vert^{2}_{{\Sigma}}-\lambda\bar{\varepsilon}. \label{lag}
\end{align}
The last two terms in (\ref{lag}) do not depend on $\widehat{\mathrm{ X}}$, and thus
\begin{align}
\widehat{\mathrm{ X}}^{\star}_{\lambda}=({I}+4\lambda {\Sigma})^{-1}(\mathbb{E}  X+\lambda{\zeta}^{-}), \label{better0}
\end{align} 
is primal-optimal.
\end{proof}
\begin{proof}[Proof of Corollary \ref{fish}]
Let us denote $\Pi(\lambda)=({I}+4\lambda {\Sigma})^{-1}$. Differentiation of (\ref{better0}) w.r.t. $\lambda\in(0,+\infty)$, gives
\begin{align}
\mathrm{d}_{\lambda}\widehat{\mathrm{ X}}^{\star}_{\lambda}
&=-4\Pi(\lambda)\Sigma\Pi(\lambda)(\mathbb{E}\{  X\}+\lambda{\zeta}^{-})+\Pi(\lambda){\zeta}^{-}, \label{dgs}
\end{align}
where the commutator $[\Pi,\Sigma]\equiv \Pi\Sigma-\Sigma_{ X\mid{X}}\Pi=0$, and therefore we may write
\begin{align}
\mathrm{d}_{\lambda}\widehat{\mathrm{ X}}^{\star}_{\lambda}&= \Pi(\lambda)^{2}\Big(-4\Sigma(\mathbb{E}  X+\lambda{\zeta}^{-})+\Pi(\lambda)^{-1}{\zeta}^{-}\Big)\nonumber\\[2pt]
&=-4\Pi(\lambda)^{2}\Sigma\Big(\mathbb{E} X-(4\Sigma)^{-1}\zeta^{-}\Big).\label{frog}
\end{align}
It is worth noticing that the dependency on $\lambda$ has transferred in $\Pi$, and that $\mathbb{E} X=\widehat{ \mathrm{X}}^{\star}_{0}$, and $(4\Sigma)^{-1}\zeta^{-}=\widehat{ \mathrm{X}}^{\star}_{\infty}$. Thus, we declare $\widehat{\Delta X}^{\star}_{0\infty}=(4\Sigma)^{-1}\zeta^{-}-\mathbb{E} X$.  By integrating (\ref{frog}) w.r.t. $\lambda$ we obtain 
\begin{align}
\widehat{\mathrm{ X}}^{\star}_{\lambda}&=\widehat{\mathrm{ X}}^{\star}_{0}+U\mathrm{H}(\lambda)U^{\top}\widehat{\Delta X}^{\star}_{0\infty}.\label{rbetter}
\end{align}
where 
$\mathrm{H}(\lambda)=\mathrm{diag}(\frac{4\lambda\sigma}{1+4\lambda\sigma})$, and $\Sigma=U\Lambda U^{\top}$ refers to the spectral decomposition of $\Sigma$.
That is, the optimal estimates are shifted versions of the conditional expectation by a transformed version of the vector $\widehat{\Delta X}^{\star}_{0\infty}$. Recall that this vector encodes the difference between the center of the circle and the center of the ellipsoid. 
From (\ref{rbetter}) we obtain
\begin{align}
U^{\top}\widehat{\mathrm{ X}}^{\star}_{\lambda}=U^{\top}\widehat{\mathrm{ X}}^{\star}_{0}+\mathrm{H}(\lambda)\big(U^{\top}\widehat{\mathrm{ X}}^{\star}_{0}-U^{\top}\widehat{\mathrm{ X}}^{\star}_{\infty}\big)
\end{align}
Further,  
\begin{align}
U^{\top}\widehat{\mathrm{ X}}^{\star}_{0}&=U^{\top}\mathbb{E}{ X} =\mathbb{E}{ X}_{u},
\end{align}where ${ X}_{u}\equiv U^{\top}{ X}$.
In addition, since $$U^{\top}\mathbb{E}[\| { X}\|^{2}_{2}{ X}]=\mathbb{E}[\|UU^{\top} { X}\|^{2}_{2}U^{\top}{ X}],$$and $U^{\top}$ is unitary,
\begin{align}
U^{\top}\widehat{\mathrm{ X}}^{\star}_{\infty}&=\frac{1}{2}\Lambda^{-1}\bigg(
\mathbb{E}[\| { X}_{u}\|^{2}_{2}{ X}_{u}]-\mathbb{E} \|{ X}_{u}\|^{2}_{2}\mathbb{E}{ X}_{u} \bigg)
\end{align}
By $\widehat{\mathrm{ X}}^{\star}_{i,u,\lambda}$, and ${ X}_{i,u}$ we mean the $i$th component of $\widehat{\mathrm{ X}}^{\star}_{u,\lambda}\equiv U^{\top}\widehat{\mathrm{ X}}^{\star}_{\lambda}$, and ${ X}_{u}$, respectively. Also, note that since the singular values of the covariance matrix are coordinate-change invariant, $\Lambda=\Lambda_{{ X}_{u}}$, or $\mathrm{var}[{ X}_{i}]=\mathrm{var}[{ X}_{i,u}]\equiv\sigma_{i}$. Thus,
\begin{align}
\widehat{\mathrm{ X}}^{\star}_{i,u,\lambda}&=\mathbb{E}{ X}_{i,u}+\frac{2\lambda\sigma_{i}}{1+2\lambda\sigma_{i}}\Bigg(\mathbb{E} { X}_{i,u}-\frac{1}{2\sigma_{i}}\Big(\sum_{k}\mathbb{E}[{ X}^{2}_{k,u}{ X}_{i,u}]
-\sum_{k}\mathbb{E}{ X}^{2}_{k,u} \mathbb{E}{ X}_{i,u}\Big)
\Bigg)\nonumber\\[2pt]
&=\mathbb{E}{ X}_{i,u}
\nonumber\\[2pt]
&+\frac{2\lambda\sigma_{i}}{1+2\lambda\sigma_{i}}\Bigg(\mathbb{E} { X}_{i,u}-\frac{1}{2\sigma_{i}}\Big(\mathbb{E}{ X}^{3}_{i,u}
-\mathbb{E}{ X}^{2}_{i,u} \mathbb{E}{ X}_{i,u}\Big)
\Bigg)\nonumber\\[2pt]
&+\frac{2\lambda\sigma_{i}}{1+2\lambda\sigma_{i}}\Bigg(-\frac{1}{2\sigma_{i}}\Big(\sum_{k\neq{i}}\mathbb{E}[{ X}^{2}_{k,u}{ X}_{i,u}]
-\sum_{k\neq{i}}\mathbb{E}{ X}^{2}_{k,u} \mathbb{E}{ X}_{i,u}\Big)
\Bigg) \label{de}
\end{align}
Completing the third power in the second term of (\ref{de}), and subsequently setting the last term equal to $\mathrm{T}_{i,\lambda}$ concludes the proof.
\end{proof}
\begin{proof}[Proof of Lemma \ref{weightedlemma}]
\label{proofweightedlemm}
The optimallity condition for the corresponding to (\ref{rfp0}) Lagrangean yields:
\begin{align}
\mathrm{d}_{\widehat{ \mathrm{X}}}\mathrm{L}(\widehat{ \mathrm{X}}^{\star},\lambda)=0,\nonumber
\end{align}
or
\begin{align}
2\mathbb{E}(\widehat{ \mathrm{X}}^{\star}_{\lambda}- X)+\lambda\mathbb{E}\bigg\{2\Big(\Vert X-\widehat{ \mathrm{X}}^{\star}_{\lambda}\Vert^{2}_{2}-\mathbb{E}\Vert X-\widehat{ \mathrm{X}}^{\star}_{\lambda}\Vert^{2}_{2} \Big)\big( 2( X-\widehat{ \mathrm{X}}^{\star}_{\lambda})-2\mathbb{E}( X-\widehat{ \mathrm{X}}^{\star}_{\lambda})\big)\bigg\} =0,\nonumber
\end{align}
or 
\begin{align}
2\mathbb{E}(\widehat{ \mathrm{X}}^{\star}_{\lambda}- X)-\lambda\mathbb{E}\bigg\{2\Big(\Vert X-\widehat{ \mathrm{X}}^{\star}_{\lambda}\Vert^{2}_{2}-\mathbb{E}\Vert X-\widehat{ \mathrm{X}}^{\star}_{\lambda}\Vert^{2}_{2} \Big)\big( 2 X-2\mathbb{E} X\big)\bigg\} =0,\nonumber
\end{align}
or 
\begin{align}
\mathbb{E}(\widehat{ \mathrm{X}}^{\star}_{\lambda}- X)-2\lambda\mathbb{E}\bigg\{\Big(\Vert X-\widehat{ \mathrm{X}}^{\star}_{\lambda}\Vert^{2}_{2}-\mathbb{E}\Vert X-\widehat{ \mathrm{X}}^{\star}_{\lambda}\Vert^{2}_{2} \Big)  X\bigg\} =0,\nonumber
\end{align}
or
\begin{align}
\widehat{ \mathrm{X}}^{\star}_{\lambda}=\mathbb{E}\bigg\{ \bigg[1+2\lambda\Big(\Vert X-\widehat{ \mathrm{X}}^{\star}_{\lambda}\Vert^{2}_{2}-\mathbb{E}\Vert X-\widehat{ \mathrm{X}}^{\star}_{\lambda}\Vert^{2}_{2} \Big)\bigg]  X\bigg\}, \label{rdnk}
\end{align}
The Radon-Nikodym derivative in (\ref{rdnk}) takes positive as well as negative values. However, $
\frac{\mathrm{d}{\xi}}{\mathrm{d}\mu}\geq 1-2\lambda \mathbb{E}|| X-\widehat{ \mathrm{X}}^{\star}_{\lambda}||^{2}_{2},\nonumber$
and therefore, 
$
\lambda\leq{1}/{2\mathbb{E}|| X-\widehat{ \mathrm{X}}^{\star}_{\lambda}||^{2}_{2}},~\lambda\in[0,+\infty)$
renders the factor in (\ref{rdnk}) a positive density. This is guaranteed when 
\begin{align}
 \lambda&{\leq}\inf_{\lambda\in[0,+\infty)}\frac{1}{2\mathbb{E}\Vert X-\widehat{ \mathrm{X}}^{\star}_{\lambda}\Vert^{2}_{2}}
 {=}\frac{1}{2\mathbb{E}\Vert X-\widehat{ \mathrm{X}}^{\star}_{\infty}\Vert^{2}_{2}}{=}\frac{1}{2\big(\mathbb{E}\Vert\widehat{\Delta X}^{\star}_{0\infty} \Vert^{2}_{2}+\mathrm{mmse}(\mu)\big)}\equiv{1}/{\gamma^{\star}},\nonumber
\end{align}
where the first equality follows because the objective increases  with $\lambda\geq{0}$.
\end{proof}
\section{Appendix B (Proofs of Sections \ref{drosec}, \ref{generalizationsec})}\label{appB}

\begin{proof}[Proof of Lemma \ref{relationofmultipliers}]
Convexity of (\ref{mv}), as well as of (\ref{md}) and therefore Slater's condition imply that the dual optimal is attained.  The KKT conditions for (\ref{mv}), and (\ref{md}) are 
$\mathrm{d}_{\widehat{ \mathrm{X}}}\mathbb{E}Z_{ \widehat{\mathrm{ X}}^{\star}}=-\lambda^{\star}_{mv}(\varepsilon)\nabla\mathrm{var}Z_{ \widehat{\mathrm{ X}}^{\star}},$ 
and
$
\mathrm{d}_{\widehat{ \mathrm{X}}}\mathbb{E}Z_{ \widehat{\mathrm{ X}}^{\star}}=-\lambda^{\star}_{md}(\varepsilon)\nabla\mathrm{var}Z_{ \widehat{\mathrm{ X}}^{\star}}({2\sqrt{\rule{0pt}{1.2ex}\mathrm{var}Z_{ \widehat{\mathrm{ X}}^{\star}}}})^{-1},$
respectively. Complementary slackness condition for (\ref{md}) yields $
{\lambda^{\star}_{md}(\varepsilon)}=2\lambda^{\star}_{mv}(\varepsilon){\sqrt{\varepsilon}}.
$
\end{proof}

\begin{proof}[Proof of Theorem \ref{Th}]
To keep the notation simple, we use just $\lambda$ for the optimal Lagrange multiplier $\lambda^{\star}_{md}(\varepsilon)$ of the mean-deviation problem, and $\sigma$ for the optimal Lagrange multiplier $\lambda^{\star}_{mv}(\varepsilon)$ of the mean-variance problem. Then, on the basis of Lemma (\ref{relationofmultipliers}) $\lambda=2\sigma\sqrt{\varepsilon}$ and 
\begin{align}
\mathbb{E}_{\nu^{\star}(\lambda)}\Vert X-\widehat{\mathrm{X}}^{\star}_{\sigma}\Vert^{2}_{2}&=\min_{\widehat{\mathrm{X}}}\sup_{\nu \in \mathrm{U}(\lambda)}\mathbb{E}_{\nu}\Vert X-\widehat{\mathrm{X}}\Vert^{2}_{2} \label{ena}\\[5pt]
&=\sup_{\nu \in \mathrm{U}(\lambda)}\mathbb{E}_{\nu}\Vert X-\widehat{\mathrm{X}}^{\star}_{\sigma}\Vert^{2}_{2}\label{dyo}\\[5pt]
&\geq \mathbb{E}_{\nu}\Vert X-\widehat{\mathrm{X}}^{\star}_{\sigma}\Vert^{2}_{2}, ~\nu\in U(\lambda). \label{tria}
\end{align}
Equation (\ref{dyo}) follows from (\ref{ena}) because $\widehat{\mathrm{X}}^{\star}_{\sigma}$ is the optimal solution 
of (\ref{50}). On top of that, 
\begin{align}
\min_{\widehat{\mathrm{X}}}\sup_{\nu \in \mathrm{U}(\lambda)}\mathbb{E}\Vert X-\widehat{\mathrm{X}}\Vert^{2}_{2}&=\mathbb{E}_{\nu^{\star}(\lambda)}\Vert X-\widehat{\mathrm{X}}^{\star}_{\sigma}\Vert^{2}_{2}\label{tesera}\\[5pt]
&=\mathbb{E}_{\nu^{\star}(\lambda)}\Vert X-\mathbb{E}_{\nu^{\star}(\lambda)}X\Vert^{2}_{2},\label{pente}
\end{align}
where (\ref{pente}) follows from (\ref{tesera}) because of Lemma \ref{weightedlemma}. With $\lambda\leq 2\sqrt{\varepsilon}/\gamma^{\star}$, again from Lemma \ref{weightedlemma}, the measure is positive and thus, 
\begin{align}
  \mathbb{E}_{\nu^{\star}(\lambda)}  \Vert X-\mathbb{E}_{\nu^{\star}(\lambda)}X\Vert^{2}_{2}&=\min_{\widehat{\mathrm{X}}}\mathbb{E}_{\nu^{\star}(\lambda)}\Vert X-\widehat{\mathrm{X}}\Vert^{2}_{2}\nonumber\\[5pt]
  &\leq \mathbb{E}_{\nu^{\star}(\lambda)}\Vert X-\widehat{\mathrm{X}}\Vert ^{2}_{2}. \label{exi}
\end{align}
Combining (\ref{tria}) and (\ref{exi}) for $\lambda\leq 2\sqrt{\varepsilon}/\gamma^{\star}$ concludes the proof.
\end{proof}

\begin{proof}[Proof of Theorem \ref{theorem}]
\label{generalization}
It is $\bar{ \mathrm{X}}_{n}=n^{-1}\sum_{i} X_{i}$, $\mathrm{T}_{n}=n^{-1}\sum_{i}( X_{i}-\bar{ \mathrm{X}}_{n})^3$ and $\widehat{ \mathrm{X}}^{\star}_{n,\bar{\lambda}}=\bar{\mathrm{X}}_{n}+\bar{\lambda}\mathrm{T}_{n}$. The condition for strict domination reads
\begin{align}
\bar{\lambda}<\frac{2\mathbb{E}\big[( X- \bar{\mathrm{X}}_{n})\mathrm{T}_{n}\big]}{\mathbb{E}\mathrm{T}^{2}_{n}+\epsilon}=\frac{2n\mathbb{E}\big[( X-n^{-1}\sum_{i} X_{i})\sum_{i}( X_{i}-n^{-1}\sum_{i} X_{i})^3\big]}{\mathbb{E}\big[\big(\sum_{i}( X_{i}-n^{-1}\sum X_{i})^3\big)^2\big]+\epsilon}.\label{condition2}
\end{align}
For the numerator of (\ref{condition2}) we have: 
\begin{align}
    \mathbb{E}\big[( X-\bar{\mathrm{X}}_{n})\sum_{i}( X_{i}-\bar{ \mathrm{X}}_{n})^3\big]=\mathbb{E}\big[ X\sum_{i}( X_{i}-\bar{ \mathrm{X}}_{n})^3\big]-\mathbb{E}\big[\bar{ \mathrm{X}}_{n}\sum_{i}( X_{i}-\bar{ \mathrm{X}}_{n})^3\big] \label{terms}
\end{align}
We compute each term of (\ref{terms}). For the first one we have
\begin{align}
&\mathbb{E}\big[  X\sum_{i}( X_{i}-\bar{ \mathrm{X}}_{n})^3\big]=\mathbb{E}\big[  X\sum_{i}( X_{i}^{3}-3 X_{i}^2\bar{ \mathrm{X}}_{n}+3 X_{i}\bar{ X}^{2}_{n}-\bar{\mathrm{X}}^3_{n})\big]\nonumber\\[5pt]
&=\mathbb{E} X\sum^{n}_{i}\big(\mathbb{E} X^{3}_{i}-3\mathbb{E}[ X^{2}_{i}\bar{ \mathrm{X}}_{n}]+3\mathbb{E}[ X_{i}\bar{ X}^{2}_{n}]-\mathbb{E}\bar{ \mathrm{X}}^{3}_{n}\big). \label{firstte}
\end{align}
We compute each term inside the parenthesis in (\ref{firstte}):
For the first we have $\mathbb{E} X_{i}^{3}=\mathbb{E} X^3$, while for the second: 
\begin{align}
 -3\mathbb{E}[ X_{i}^2\bar{ \mathrm{X}}_{n}]&=-3n^{-1}\mathbb{E}[ X^{2}_{i}( X_{i}+\sum^{n}_{j=1,j\neq{i}} X_{j})] \nonumber\\[5pt]
&=-3n^{-1}\mathbb{E} X^{3}-3n^{-1}(n-1)\mathbb{E} X^{2}\mathbb{E} X.\label{deyteroros}
\end{align}
For the third term inside the parenthesis of (\ref{firstte}) we use 
\begin{align}
\mathbb{E}\bigg(\sum^{n}_{j=1,j\neq{i}} X_{j}\bigg)^2=(n-1)\mathbb{E} X^2+(\mathbb{E} X)^2(n-1)(n-2) \label{quadraticterm}
\end{align}
to obtain
\begin{align}
3\mathbb{E}[ X_{i}\bar{\mathrm{ X}}^{2}_{n}]&=3n^{-2}\mathbb{E} X^3+9n^{-2}(n-1)\mathbb{E} X\mathbb{E} X^{2} +3n^{-2}(n-1)(n-2)(\mathbb{E} X)^3.\label{tritoros}
\end{align}
Lastly, the fourth term reads:
\begin{align}
\mathbb{E}\bar{ \mathrm{ X}}^{3}_{n}=n^{-2} \mathbb{E}  X^3+3 n^{-2}(n-1) \mathbb{E}  X^2 \mathbb{E}  X+n^{-2}(\mathbb{E}  X)^3\left(n^2-3 n+2\right). \label{tetartoros}
\end{align}
Thus, by (\ref{deyteroros}), (\ref{tritoros}), (\ref{tetartoros}), the parenthesis in (\ref{firstte}) reads:
\begin{align}   \big(\mathbb{E} X^{3}_{i}-3\mathbb{E}[ X^{2}_{i}\bar{ \mathrm{X}}_{n}]+3\mathbb{E}[ X_{i}\bar{ \mathrm{ X}}^{2}_{n}]-\mathbb{E}\bar{ \mathrm{ X}}^{3}_{n}\big)
&=\mathbb{E}  X^3\left[2 n^{-2}-3 n^{-1}+1\right] \nonumber\\[5pt]
&\hspace{10pt} +\mathbb{E}  X^2 \mathbb{E} X\left[3 n^{-2}(n-1)\right] \nonumber\\[5pt]
&\hspace{10pt} +(\mathbb{E}  X)^3 2 n^{-2}\left(n^2-3 n+2\right).
\end{align}
The summation in (\ref{firstte}) increases order by $n$:
\begin{align}
 \sum^{n}_{i=1} \big(\mathbb{E} X^{3}_{i}-3\mathbb{E}[ X^{2}_{i}\bar{ \mathrm{X}}_{n}]+3\mathbb{E}[ X_{i}\bar{ \mathrm{ X}}^{2}_{n}]-\mathbb{E}\bar{ \mathrm{ X}}^{3}_{n}\big) &=\mathbb{E}  X^3\left[2 n^{-1}+n-3\right] \nonumber\\[5pt]
& +\mathbb{E}  X^2 \mathbb{E}  X\left[3 n^{-1}(n-1)\right] \nonumber\\[5pt]
&+(\mathbb{E}  X)^3 2 n^{-1}\left(n^2-3 n+2\right).
\end{align}
Therefore, for the first term in (\ref{terms}) we have: 
\begin{align}
\mathbb{E}\big[  X\sum_{i}( X_{i}-\bar{ \mathrm{X}}_{n})^3\big]&=\mathbb{E}  X \mathbb{E}  X^3\left(2 n^{-1}+n-3\right)\nonumber\\[5pt]
&\hspace{15pt} +\mathbb{E}  X^2(\mathbb{E}  X)^23 n^{-1}(n-1) +(\mathbb{E}  X)^4 2 n^{-1}\left(n^2-3 n+2\right)  \nonumber\\[5pt]
&=\mathbb{E} X \big( \mathbb{E} X^3-3\mathbb{E} X^2\mathbb{E} X+2(\mathbb{E} X)^3\big)n^{-1}(n-1)(n-2)\nonumber\\[5pt]
&=\mathbb{E} X\mathbb{E}( X-\mathbb{E} X)^3n^{-1}(n-1)(n-2)
\label{protos_oros} 
\end{align}
Moving forward, for the second term of (\ref{terms}) we have
\begin{align}
&\mathbb{E}\big[\bar{ \mathrm{X}}_{n}\sum_{i}( X_{i}-\bar{ \mathrm{X}}_{n})^3\big]= \mathbb{E}\big[\bar{ \mathrm{X}}_{n}\sum_{i}\left(  X^{3}_{i}-3 X^{2}_{i}\bar{ \mathrm{X}}_{n}+3 X_{i}\bar{ \mathrm{X}}^{2}_{n}-\bar{ \mathrm{X}}^{3}_{n}
\right)\big]\nonumber\\[5pt]
&=\sum_{i}\left(\mathbb{E}[ X^{3}_{i}\bar{ \mathrm{X}}_{n}]-3\mathbb{E}[ X^{2}_{i}\bar{ \mathrm{X}}^2_{n}]+3\mathbb{E}[ X_{i}\bar{\mathrm{X}}^3_{n}]-\mathbb{E}\bar{ \mathrm{X}}^{4}_{n}\right) \label{secondte}
\end{align}
We compute the terms inside the parenthesis in (\ref{secondte}). For the first one we have:
\begin{align}
    \mathbb{E}[ X^{3}_{i}\bar{ \mathrm{X}}_{n}]&=n^{-1}\mathbb{E}[ X_i^3\left( X_i+\sum_{j=1, j \neq i}^n  X_j\right)]\nonumber\\[5pt]
    &=n^{-1}\mathbb{E} X^{4}+n^{-1}(n-1)\mathbb{E} X^{3}\mathbb{E} X. \label{firstoros}
\end{align}
For the second:
\begin{align}
&-3\mathbb{E}[ X^{2}_{i}\bar{ \mathrm{X}}^{2}_{n}]=-3n^{-2}\mathbb{E}[ X^{2}_{i}\left( X_{i}+\sum_{j=1,j\neq{i}} X_{j}\right)^2]
\nonumber\\[5pt]
    &\stackrel{(\ref{quadraticterm})}{=}-3n^{-2} \mathbb{E} X^{4}-6n^{-2}(n-1)\mathbb{E} X^{3}\mathbb{E} X
    -3n^{-2}(n-1) (\mathbb{E}  X^2)^2-3n^{-2}(n-1)(n-2)\mathbb{E} X^{2}(\mathbb{E}  X)^2 \label{secondoros}
\end{align}
The third term in (\ref{secondte}) reads:
\begin{align}
& 3\mathbb{E}[ X_{i}\bar{ \mathrm{X}}^3_{n}]=3n^{-3}\mathbb{E}[ X_{i}(\sum_{j} X_{j})^{3}]\nonumber\\[5pt]
&=3n^{-3}\mathbb{E}[ X_{i}( X_{i}+\sum_{j=1,j\neq{i}} X_{j})^{3}]\nonumber\\[5pt]
&=3n^{-3}\mathbb{E} X^{4}+12n^{-3}(n-1)\mathbb{E} X^{3}\mathbb{E} X+9n^{-3}(n-1)(\mathbb{E} X^{2})^2\nonumber\\[5pt]
&\hspace{15pt}+18n^{-3}(n-1)(n-2)\mathbb{E} X^{2}(\mathbb{E} X)^2+3n^{-3}(\mathbb{E} X)^4(n-1)(n-2)(n-3)
,\label{gms}
\end{align}
where we expanded the third power and subsequently used that
\begin{align}
\mathbb{E}\left(\sum_{j=1,j\neq{i}}^{n} X_{j}\right)^3=(n-1)\mathbb{E} X^{3}+3(n-1)(n-2)\mathbb{E} X^{2}\mathbb{E} X+(\mathbb{E} X)^3(n-1)(n-2)(n-3).\label{gmtspts}
\end{align}
The last term in the parenthesis of (\ref{secondte}) is  
\begin{align}
-\mathbb{E}\bar{ \mathrm{X}}^{4}_{n}&=-n^{-3}\mathbb{E} X^{4}-4n^{-3}(n-1)\mathbb{E} X^{3}\mathbb{E} X-3n^{-3}(n-1)(\mathbb{E} X^{2})^2\nonumber\\[5pt]
&-6n^{-3}(n-1)(n-2)\mathbb{E} X^{2}(\mathbb{E} X)^{2}{-}n^{-3}(n-1)(n-2)(n-3)(\mathbb{E} X)^4,
\end{align}
where we used 
\begin{align}
&\mathbb{E}\bar{ \mathrm{X}}_{n}^4=n^{-4}\sum_{k_{1},\dots,k_{n}\geq{0}}\frac{4!}{k_{1}!\dots k_{n}!}\mathbb{E}[ X^{k_{1}}\cdot\dots\cdot X^{k_{n}}], \label{sum}
\end{align}
where $k_{1},\dots,k_{n}\geq{0}$ are all combinations that sum up to $n$.
By gathering all the terms we have that the parenthesis in (\ref{secondte}) reads:
\begin{align}
\left(\mathbb{E}[ X^{3}_{i}\bar{ \mathrm{X}}_{n}]-3\mathbb{E}[ X^{2}_{i}\bar{ \mathrm{X}}^2_{n}]+3\mathbb{E}[ X_{i}\bar{ \mathrm{X}}^3_{n}]-\mathbb{E}\bar{ \mathrm{X}}^{4}_{n}\right)
&=n^{-3}\mathbb{E} X^{4}(n-1)(n-2)\nonumber\\[5pt]
&+n^{-3}\mathbb{E} X^{3}\mathbb{E} X(n-1)(n-2)(n-4)\nonumber\\[5pt]
&-3(\mathbb{E} X^{2})^2n^{-3}(n-1)(n-2)\nonumber\\[5pt]
&-3\mathbb{E} X^{2}(\mathbb{E} X)^2n^{-3}(n-1)(n-2)(n-4)\nonumber\\[5pt]
&+2(\mathbb{E} X)^4 n^{-3}(n-1)(n-2)(n-3)
\end{align}
Lastly, the outer summation in (\ref{secondte}) increases the order by $n$.
\begin{align}
\mathbb{E}\left[\bar{ \mathrm{X}}_n \sum_i\left( X_i-\bar{ \mathrm{X}}_n\right)^3\right]
&= n^{-2}\mathbb{E} X^{4}(n-1)(n-2)\nonumber\\[5pt]
&+n^{-2}\mathbb{E} X^{3}\mathbb{E} X(n-1)(n-2)(n-4)\nonumber\\[5pt]
&-3(\mathbb{E} X^{2})^2n^{-2}(n-1)(n-2)\nonumber\\[5pt]
&-3\mathbb{E} X^{2}(\mathbb{E} X)^2n^{-2}(n-1)(n-2)(n-4)\nonumber\\[5pt]
&+2(\mathbb{E} X)^4 n^{-2}(n-1)(n-2)(n-3) \nonumber\\[5pt]
&=\mathbb{E} X\mathbb{E}( X-\mathbb{E} X)^3n^{-1}(n-1)(n-2)\nonumber\\[5pt]
&+n^{-2}(n-1)(n-2)\big(\mathbb{E}( X-\mathbb{E} X)^4-3[\mathbb{E}( X-\mathbb{E} X)^2]^2\big)
\label{deyteros_oros}
\end{align}
By plugging (\ref{protos_oros}), and (\ref{deyteros_oros}) into (\ref{terms}) we obtain
\begin{align}
    \mathbb{E}\big[( X-\bar{ \mathrm{X}}_{n})\sum_{i}( X_{i}-\bar{ \mathrm{X}}_{n})^3\big]&=
    \mathbb{E} X\mathbb{E}( X-\mathbb{E} X)^3n^{-1}(n-1)(n-2)\nonumber\\[5pt]&-
    \mathbb{E} X\mathbb{E}( X-\mathbb{E} X)^3n^{-1}(n-1)(n-2)\nonumber\\[5pt]
&-n^{-2}(n-1)(n-2)\big(\mathbb{E}( X-\mathbb{E} X)^4-3[\mathbb{E}( X-\mathbb{E} X)^2]^2\big)
    \label{terms_fin}
\end{align}
or 
\begin{align}
\mathbb{E}\big[( X-\bar{ \mathrm{X}}_{n})\sum_{i}( X_{i}-\bar{ \mathrm{X}}_{n})^3\big]=  n^{-2}(n-1)(n-2)\big(3m^{2}_{2}-m_{4}\big),  
\end{align}
where $m_{2}$, and $m_{4}$ denote the second and fourth central moments of the probability measure $\mu$, respectively. Strictly positive values for $\bar{\lambda}$ are feasible for all those models with $3m^{2}_{2}>m_{4}$. Lastly, the condition (\ref{condition2}) reads:
\begin{align}
\bar{\lambda}&\leq\frac{2\mathbb{E}\big[( X- \bar{\mathrm{X}}_{n})\mathrm{T}_{n}\big]}{\mathbb{E}\mathrm{T}^{2}_{n}+\epsilon}=\frac{2(3m^{2}_{2}-m_{4})}{\mathbb{E}\mathrm{T}^{2}_{n}+\epsilon}\frac{n^{2}-3n+2}{n^3}\nonumber\\[5pt]
&\leq \frac{1}{n}\left[\frac{8(3m^{2}_{2}-m_{4})}{\mathbb{E}\mathrm{T}^{2}_{n}+\epsilon}\right].
\end{align}

For the case where the random variable is supported over a compact subset of $\mathbb{R}^{d}$, we can bound the polynomial terms of $\gamma^{\star}$ in (\ref{weightedlemma}), and also the variance in the denominator of the first factor of (\ref{fisherform}). This assumption is not absolutely necessary and can be avoided by showing that with high probability (\ref{simpempiricalbetter}) is a DRO estimator. Since Theorem \ref{theorem} allows $n\geq 3$, this can be done by controlling the empirical variance in the factor $\lambda/(1+2\lambda\sigma)$ of (\ref{fisherform}) as well as, the polynomial terms in the expectation (w.r.t.\ $\mu_{n}$) related to the threshold $\gamma^{\star}$. Then, for a fixed model, as $\lambda \rightarrow 0$ one has to trade performance for a distributional robustness, the latter being favored by the logarithmic dependency in the number of samples.
\end{proof}

\end{document}